\newcommand\momentexpo{{\varsigma}}
\theoremstyle{plain}
\newtheorem{theorem}{Theorem}[section]
\newtheorem{proposition}[theorem]{Proposition}
\newtheorem{lemma}[theorem]{Lemma}
\theoremstyle{definition}
\theoremstyle{remark}
\newtheorem{remark}{Remark}[section]
\def\RR{\mathbb{R}}
\def\NN{\mathbb{N}}
\def\EE{\mathbb{E}}
\def\P{\mathbb{P}}
\def\R{\mathbb{R}}
\def\1{\mathbf{1}}
\def\Z{\mathbb{Z}}
\begin{document}

\title{Quasi-limiting behaviour of the sub-critical Multitype Bisexual Galton-Watson Branching Process}
\author{Coralie Fritsch$^{1}$, Denis Villemonais$^{1,2}$ and Nicolás Zalduendo$^{3}$}
\footnotetext[1]{Université de Lorraine, CNRS, Inria, IECL, F-54000 Nancy, France}
\footnotetext[2]{Institut universitaire de France (IUF)}
\footnotetext[3]{MISTEA, Université de Montpellier, INRAE, Institut Agro, Montpellier, France}

\maketitle

\begin{abstract}
    We investigate the quasi-limiting behavior of bisexual subcritical Galton-Watson branching processes. While classical subcritical Galton-Watson processes have been extensively analyzed,  bisexual  Galton-Watson branching processes present unique difficulties because of the lack of the branching property. To prove the existence of and convergence to one or several quasi-stationary distributions, we leverage on recent developments linking  bisexual  Galton-Watson branching processes extinction to the eigenvalue of a concave operator.
\end{abstract}

\section{Introduction}
\label{sec:introduction}

We study the quasi-limiting behaviour of bisexual subcritical Galton-Watson branching processes (bGWbp). This problem has been studied and largely solved decades ago for classical sub-critical Galton-Watson processes (see e.g.~\cite{Yaglom1947,SenetaVere-Jones1966,AthreyaNey1972}), but remains open for its bisexual counterpart. Recent developments in the theory of multi-dimensional bGWbp~\cite{FritschVillemonaisEtAl2022}  show that, under appropriate super-additivity assumptions, the extinction of these processes can be characterised by the eigenvalue $\lambda^*$ associated to a positively homogeneous concave operator and, more precisely, by its position  relatively to $1$: if $\lambda^*\leq 1$, then the process is eventually extinct almost surely, while, if $\lambda^*>1$, then, with positive probability, the process grows exponentially fast with rate equal or close to $\lambda^*$. We refer the reader to the surveys~\cite{Alsmeyer2002,Hull2003,Molina2010} on bGWbp and~\cite{Daley1968,DaleyHullEtAl1986,GonzalezHullEtAl2006,GonzalezMolina1996,zaldu2023} for further references. In this paper, we consider the sub-critical case $\lambda^*<1$ and show that the process admits infinitely many quasi-stationary distributions. Under an additional polynomial moment assumption, we prove that the process admits a finitely generated family of quasi-stationary distributions and, under an additional irreducibility assumption, a unique quasi-limiting distribution for compactly supported initial distributions. Although this is similar to the classical Galton-Watson case, $\lambda^*$ is not in general equal to the exponential survival rate of the process.

Let us introduce more formally our settings and assumptions. Consider $p,q \in \NN=\{0,1,2,\ldots\}$ with $p,q\geq 1$, a non-negative vector function $\xi: \NN^q\to \NN^p$ such that $\xi(0)=0$, and a family of integrable random vectors $V=(V_{i,\cdot})_{1\leq i \leq p}$, taking value in $\NN^q$ and whose expectation is denoted by~$\mathbb V$. We assume that $\sum_{i=1}^q \mathbb V_{i,j}>0$ for all $j\in\{1,\ldots,q\}$.  
We consider a process $Z=(Z_n)_{n\in\NN}$ on $\NN^p$ which represents the random sequence of the number of couples of each type in the population, and which evolves as follows: given $Z_{n-1}=(Z_{n-1,1},\ldots,Z_{n-1,p})$, we define for $n\geq 1$ the vector of children of the $n-$th generation $(W_{n,1},\ldots,W_{n,q})$ by
\[
W_{n,j}=\sum_{i=1}^p\sum_{k=1}^{Z_{n-1,i}} V_{i,j}^{(k,n)},\text{ for }1\leq j\leq q,
\]
where  $(V^{(k,n)})_{k,n\in\NN}$ is a family of i.i.d. copies of $V$.
Then, we set the vector of couples in the $n-$th generation as 
\[
Z_n=(Z_{n,1},\dots,Z_{n,p})=\xi(W_{n,1},\dots,W_{n,q}).
\]
The function $\xi$ is referred to as the \textit{mating function}. For instance, if $p=1$, $q=2$ and if $W_{n,1}$ represents the number of females  and $W_{n,2}$ the number of males in the $n-$th generation, classical choices for $\xi$ are $\xi(x,y)=\min\{x,y\}$ (the \textit{perfect fidelity} mating function) and $\xi(x,y) =x\min\{1,y\}$ (the \textit{promiscuous} mating function). If $p=q=2$, then $\xi(x,y)=(x,y)$  corresponds to the classical bi-dimensional Galton-Watson process.

For any probability measure $\mu$ on $\mathbb N^p$, we use the notation $\mathbb P_\mu$ and $\mathbb E_\mu$ for the law and associated expectation of the bisexual Galton-Watson process with initial number of couples $Z_0$ distributed according to $\mu$. As usual, we use the notations $\mathbb P_x$ and $\mathbb E_x$ when $\mu=\delta_x$ for some $x\in \mathbb N^p$. Note that $\xi(0)=0$ entails that $0$ is absorbing for $Z$.



We are interested in conditions ensuring the existence of a quasi-statio\-nary distribution for $Z$. We recall that a \textit{quasi-stationary distribution} for $Z$ is a probability measure $\nu_{QS}$ on $\mathbb N^p\setminus\{0\}$ such that, for all $n\geq 0$, $\mathbb P_{\nu_{QS}}(Z_n\neq 0)>0$ and
\begin{align*}
\mathbb P_{\nu_{QS}}(Z_n\in\cdot\mid Z_n\neq 0)=\nu_{QS}(\cdot).
\end{align*}
It is known that (we refere the reader to the book~\cite{ColletMartinezEtAl2013} and to the surveys~\cite{DoornPollett2013,MeleardVillemonais2012} for general properties and further examples), for any quasi-stationary distribution $\nu_{QS}$, there exists $\theta_{QS}\in(0,1]$ such that
\begin{align*}
\mathbb P_{\nu_{QS}}(Z_n\neq 0)=\theta_{QS}^n\text{ and }\mathbb P_{\nu_{QS}}(Z_n\in\cdot,\ Z_n\neq 0)=\theta_{QS}^n\nu_{QS}(\cdot).
\end{align*}
In what follows, $\theta_{QS}$ is referred to as the \textit{absorption parameter} of $\nu_{QS}$.

Throughout the paper, we make the assumption that $\xi$ is super-additive and sub-affine:

\medskip \noindent \textbf{Assumption~(S).} There exists $\alpha,\beta\in\mathbb R_+^p$ such that
\begin{equation}
\label{eq:superadditive}
 \alpha |x_1+x_2| +\beta \geq \xi(x_1+x_2)\geq \xi(x_1)+\xi(x_2),\ \forall x_1,x_2\in \NN^q,
\end{equation}
where $|\cdot|$ is the $\ell^1$-norm. 

\medskip
This assumption implies that the function $\mathfrak M:\mathbb R_+^p\mapsto \mathbb R_+^p$ given by
\[
\mathfrak M(z)= \lim_{k\to \infty} \dfrac{\xi (\lfloor kz\mathbb{V}\rfloor)}{k}
\]
is well defined,  bounded over $\mathbb S:=\left\{z\in \mathbb R_+^p,\ |z|=1\right\}$, positively homogeneous (i.e. $\mathfrak M(az)=a\mathfrak M(z)$ for all $a>0$ and all $z\in\mathbb R_+^p$) and concave. We refer the reader to Section~3 in~\cite{FritschVillemonaisEtAl2022}, where other properties of this functional are derived.  In addition, we assume that 

\medskip \noindent \textbf{Assumption~(P).}  $\mathfrak{M}$ is primitive, which means that there exists $n_0\geq 1$ such that $\mathfrak M^m(z) >0$ for all $m\geq n_0$ and $z\in \RR_+^p \setminus \{0\}$, where $\mathfrak M^m$ is the $m-$th iterate of $\mathfrak M$  and $\mathfrak M^m(z) >0$ means that all the coordinates of $\mathfrak M^m(z)$ are strictly positive.

\medskip In particular, this entails that there exists $\lambda^*> 0$ and $z^*\in\mathbb S^*:=\left\{z\in (\mathbb R_+\setminus\{0\})^p,\ |z|=1\right\}$ such that the limit 
\begin{align}
\label{eq:defP}
\lim_{n\to\infty} \dfrac{\mathfrak M^n (z)}{(\lambda^*)^n}=\mathcal P(z) z^*
\end{align}
exists, where $\mathcal P:\mathbb R_+^p\to \mathbb R_+$ is strictly positive on $\mathbb R_+^p\setminus\{0\}$, positively homogeneous and concave (see~\cite{Krause1994} for a general development on this theory  and Section~4.1 of~\cite{FritschVillemonaisEtAl2022} for an application to the context of bGWbp). In addition, according to Lemma~37 in~\cite{FritschVillemonaisEtAl2022}, $\left((\lambda^*)^{-n}\mathcal P(Z_n)\right)_{n\in\mathbb N}$ is a supermartingale. In particular, $\lambda^*<1$ implies that $(Z_n)_{n\in\mathbb N}$ goes extinct almost surely for any initial distribution (see~\cite{FritschVillemonaisEtAl2022} for details).


Our first result states that, in the sub-critical case, the process admits infinitely many quasi-stationary distributions,
indexed by $\theta\in [\upsilon_0,1)$, where
\begin{align}
\label{eq:upsilon0-def}
\upsilon_0:= \inf\{\upsilon>0,\text{ such that }\mathbb E_z(\upsilon^{-T_0})<+\infty,\ \forall z\in\mathbb N^p\}
\end{align}
with $T_0$ the extinction time.

\newcommand{\utheta}{\theta_0}
We also define the \textit{exponential convergence parameter} $\utheta$ by
\begin{align}
\label{eq:utheta_def}
\utheta = \sup_{z\in \mathbb N^p\setminus\{0\}}\sup\{\theta>0,\ \liminf_{n\to+\infty} \theta^{-n}\mathbb P_z(Z_n\neq 0)>0\},
\end{align}
with $\sup \emptyset=-\infty$. 

\begin{proposition}
\label{prop:theta0_leq_v0_lambda}
    If Assumptions~(S) and (P) hold true, then we have  $\utheta\leq \upsilon_0\leq \lambda^*$.
\end{proposition}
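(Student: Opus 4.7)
The plan is to prove the two inequalities separately, both relying on the elementary Fubini identity, valid for $\upsilon\in(0,1)$,
\[
\mathbb E_z[\upsilon^{-T_0}]-1 = (\upsilon^{-1}-1)\sum_{n\geq 0}\upsilon^{-n}\mathbb P_z(Z_n\neq 0),
\]
which equates finiteness of $\mathbb E_z[\upsilon^{-T_0}]$ and convergence of $\sum_n \upsilon^{-n}\mathbb P_z(Z_n\neq 0)$. Note that $\mathbb E_z[\upsilon^{-T_0}]\leq 1$ whenever $\upsilon\geq 1$, which gives $\upsilon_0\leq 1$ for free; furthermore, the set defining $\upsilon_0$ is upward closed because $\upsilon\mapsto\upsilon^{-T_0}$ is pointwise non-increasing, so $\upsilon_0$ is a genuine threshold.

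For $\utheta\leq\upsilon_0$, I would fix $\theta<\utheta$. Since $\mathbb P_z(Z_n\neq 0)\leq 1$ forces $\utheta\leq 1$, I may pick, by definition of the supremum, some $z\in\mathbb N^p\setminus\{0\}$ and some $\theta'\in(\theta,1]$ satisfying $\liminf_n (\theta')^{-n}\mathbb P_z(Z_n\neq 0)>0$, so that $\mathbb P_z(Z_n\neq 0)\geq c(\theta')^n$ for large $n$ and some $c>0$. Plugging this into the series at $\upsilon=\theta$ yields a divergent geometric minorant of ratio $\theta'/\theta>1$, so $\mathbb E_z[\theta^{-T_0}]=+\infty$, and therefore $\theta\leq\upsilon_0$ by upward closedness. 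Letting $\theta\uparrow\utheta$ concludes.

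For $\upsilon_0\leq\lambda^*$, the case $\lambda^*\geq 1$ is immediate from $\upsilon_0\leq 1$, so I may assume $\lambda^*<1$. The supermartingale property of $((\lambda^*)^{-n}\mathcal P(Z_n))_n$ from Lemma~37 of~\cite{FritschVillemonaisEtAl2022} yields $\mathbb E_z[\mathcal P(Z_n)]\leq(\lambda^*)^n\mathcal P(z)$. Continuity, positive homogeneity and strict positivity of $\mathcal P$ on $\mathbb R_+^p\setminus\{0\}$, combined with compactness of $\mathbb S$, produce an $\eta>0$ such that $\mathcal P(x)\geq \eta|x|\geq\eta$ for every $x\in\mathbb N^p\setminus\{0\}$, whence
\[
\mathbb P_z(Z_n\neq 0)\leq \eta^{-1}\mathcal P(z)\,(\lambda^*)^n.
\]
For any $\upsilon\in(\lambda^*,1)$, the series $\sum_n \upsilon^{-n}\mathbb P_z(Z_n\neq 0)$ is then dominated by a convergent geometric series of ratio $\lambda^*/\upsilon<1$, and the Fubini identity gives $\mathbb E_z[\upsilon^{-T_0}]<+\infty$ for every $z$; letting $\upsilon\downarrow\lambda^*$ delivers $\upsilon_0\leq\lambda^*$. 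The only slightly delicate ingredient is the uniform lower bound $\mathcal P\geq\eta$ on $\mathbb N^p\setminus\{0\}$, which is what converts an $L^1$ estimate on $\mathcal P(Z_n)$ into a tail bound on the survival probability; everything else reduces to comparison of geometric series.
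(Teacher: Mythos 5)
Your argument is correct and follows essentially the same route as the paper: the inequality $\theta_0\le\upsilon_0$ is obtained by turning the survival-probability lower bound at rate $\theta'$ into divergence of $\mathbb E_z(\theta^{-T_0})$ (you do this via summation by parts, the paper via a term-wise liminf on $v^{-\ell}\mathbb P_z(T_0=\ell)$), and $\upsilon_0\le\lambda^*$ uses, exactly as in the paper, the supermartingale $((\lambda^*)^{-n}\mathcal P(Z_n))_{n}$ together with a uniform lower bound on $\mathcal P$ over $\mathbb N^p\setminus\{0\}$ and a geometric series comparison. One small caveat: continuity of $\mathcal P$ up to the boundary of the orthant is not guaranteed under (S) and (P) alone (the paper needs Assumption~(C) for that kind of regularity), but the bound $\inf_{\mathbb N^p\setminus\{0\}}\mathcal P>0$ that you actually use still holds, for instance because a concave function on the simplex $\mathbb S$ attains its minimum at a vertex $e_i$ where $\mathcal P(e_i)>0$, or by superadditivity of $\mathcal P$; so this is a justification to adjust rather than a genuine gap.
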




\begin{theorem}
    \label{thm:infinitely-many-QSDs} If Assumptions~(S) and (P) hold true and $\lambda^*<1$, then $Z$ admits a continuum of linearly independent quasi-stationary distributions:  for any $\theta\in[\upsilon_0,1)$, there exists a quasi-stationary distribution with absorption parameter $\theta$.
\end{theorem}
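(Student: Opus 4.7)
The plan is to construct, for each $\theta\in[\upsilon_0,1)$, a probability measure $\nu_\theta$ on $\mathbb N^p\setminus\{0\}$ satisfying the quasi-stationary equation $\nu_\theta P^0=\theta\nu_\theta$, where $P^0(x,\cdot):=\mathbb P_x(Z_1\in\cdot,\,Z_1\neq 0)$ denotes the sub-Markov kernel of $Z$ on the non-absorbing states. The central analytic input is the function
\[
\eta_\theta(x):=\mathbb E_x\bigl(\theta^{-T_0}\bigr),\qquad x\in\mathbb N^p,
\]
which is finite everywhere for every $\theta>\upsilon_0$ by the very definition~\eqref{eq:upsilon0-def} of $\upsilon_0$. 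A one-step conditioning yields $P^0\eta_\theta(x)=\theta\eta_\theta(x)-\mathbb P_x(Z_1=0)$, so $\eta_\theta$ is strictly positive on $\mathbb N^p\setminus\{0\}$ and $\theta$-superharmonic for $P^0$ with a non-negative killing defect.

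For $\theta\in(\upsilon_0,1)$, my approach is based on a Doob-type $h$-transform. First, one produces a strictly positive $\theta$-harmonic function $h_\theta$ for $P^0$, i.e.\ $P^0 h_\theta=\theta h_\theta$, by a Krein--Rutman-style compactness argument in the weighted Banach space $B_\theta:=\{f:\mathbb N^p\setminus\{0\}\to\mathbb R\mid \sup_x |f(x)|/\eta_\theta(x)<\infty\}$: on $B_\theta$, the operator $\theta^{-1}P^0$ has norm at most $1$ (by the superharmonic inequality for $\eta_\theta$), and assumption~(P) provides the positivity/irreducibility needed to extract a strictly positive fixed point $h_\theta\in B_\theta$ at the spectral radius. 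The $h$-transform
\[
\tilde P^\theta(x,y):=\frac{P^0(x,y)\,h_\theta(y)}{\theta\,h_\theta(x)}
\]
then defines an honest Markov chain $\tilde Z^\theta$ on $\mathbb N^p\setminus\{0\}$. Using (P) for irreducibility and $\eta_\theta/h_\theta$ as a Lyapunov function, one shows that $\tilde Z^\theta$ is positive recurrent and admits a unique invariant probability measure $\pi_\theta$. Setting $\nu_\theta(x)\propto\pi_\theta(x)/h_\theta(x)$ (normalized), a direct computation yields $\nu_\theta P^0=\theta\nu_\theta$.

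The main obstacle is to ensure that the spectral radius of $\theta^{-1}P^0$ on $B_\theta$ is indeed equal to $1$ (equivalently, that $\theta$ is genuinely an eigenvalue of $P^0$ in this weighted space), and that the $h$-transformed chain is positive recurrent with an invariant measure satisfying $\pi_\theta(1/h_\theta)<\infty$. Both points rely on combining assumption~(P) with the quantitative tail behavior of $T_0$ encoded in $\eta_\theta$ through the definition of $\upsilon_0$; this is an adaptation of Vere-Jones' ``$R$-positive recurrence'' framework for irreducible sub-Markov matrices, with the weighted norm tuned precisely to match the prescribed absorption parameter $\theta$.

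The endpoint $\theta=\upsilon_0$ is handled by letting $\theta\downarrow\upsilon_0$ along the previously constructed family and extracting a weak subsequential limit, with the required tightness inherited from uniform Lyapunov bounds on compact subintervals of $(\upsilon_0,1)$. Linear independence of $\{\nu_\theta\}_{\theta\in[\upsilon_0,1)}$ is then automatic: each $\nu_\theta$ is a left eigenmeasure of the linear operator $P^0$ with the pairwise distinct eigenvalue $\theta$, and left eigenmeasures associated with distinct eigenvalues of a linear operator are always linearly independent.
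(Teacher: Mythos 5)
There is a genuine gap, and it lies at the heart of your construction: the $h$-transformed chain $\tilde Z^\theta$ cannot be positive recurrent for any $\theta\in(\upsilon_0,1)$. Indeed, by the very definition~\eqref{eq:upsilon0-def} of $\upsilon_0$, for $\theta>\upsilon_0$ one has $\mathbb E_x(\theta^{-T_0})<+\infty$, hence $\sum_{n\geq 0}\theta^{-n}\mathbb P_x(Z_n\neq 0)<+\infty$, and therefore for any positive function $h_\theta$ with $P^0h_\theta=\theta h_\theta$ the Green's function of the transform satisfies
\begin{align*}
\sum_{n\geq 0}\tilde P^\theta{}^{\,n}(x,x)=\sum_{n\geq 0}\theta^{-n}P^{0,n}(x,x)\leq \sum_{n\geq 0}\theta^{-n}\mathbb P_x(Z_n\neq 0)<+\infty ,
\end{align*}
so $\tilde Z^\theta$ is transient and admits no invariant \emph{probability} measure $\pi_\theta$; the formula $\nu_\theta\propto\pi_\theta/h_\theta$ therefore has nothing to be applied to. This is not a technical obstacle that Vere-Jones' theory can remove: $R$-positive recurrence is a statement at the critical (decay) parameter only, whereas every $\theta\in(\upsilon_0,1)$ lies strictly inside the region where the $\theta$-resolvent converges, i.e.\ the ``$R$-transient'' regime. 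The QSDs with absorption parameter $\theta>\theta_0$ typically satisfy $\nu_\theta(h_\theta)=+\infty$, which is precisely why they cannot be recovered from an invariant probability of a Doob transform. Two secondary points: Assumption~(P) is primitivity of the deterministic operator $\mathfrak M$ and does \emph{not} give irreducibility of $Z$ on $\mathbb N^p\setminus\{0\}$ (the paper deliberately avoids irreducibility here and only adds it in Theorem~\ref{thm:unique-QSD}), and the Krein--Rutman step is unsupported since $\theta^{-1}P^0$ is not compact on your weighted space and a norm bound $\leq 1$ gives neither spectral radius $1$ nor a positive eigenfunction.

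For comparison, the paper's route sidesteps eigenfunctions entirely: for $\lambda\in(a_0,a_1)$ it normalizes the $\lambda$-resolvent measures $\sum_{\ell\geq 0}\lambda^{-\ell}K^\ell(x_n,\cdot)$ started from points $x_n$ tending to infinity, and obtains tightness of these probability measures from the divergence condition~\eqref{eq:thm:infinity2}, which in the bGWbp case follows from primitivity and super-additivity (the survival probability tends to $1$ as $|z|\to+\infty$). Weak limits are then exact left eigenmeasures with eigenvalue $\lambda$, and the endpoint $\lambda=a_0\leq\upsilon_0$ is reached by a further compactness argument. Your limiting argument for $\theta=\upsilon_0$ and your observation that eigenmeasures with distinct eigenvalues are linearly independent are fine in themselves, but they rest on the construction for $\theta>\upsilon_0$, which fails as explained above.
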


The proofs of Proposition~\ref{prop:theta0_leq_v0_lambda} and Theorem~\ref{thm:infinitely-many-QSDs} are detailed in Section~\ref{sec:infinity}, where we first state and prove a general result ensuring the existence of infinitely many quasi-stationary distributions for Feller processes, and then apply it to the sub-critical bGWbp.  We discovered during the finalization of this manuscript that the general result and its proof, which was part of the PhD thesis of one of the author~\cite{zaldu2023}, has also been proved independently for discrete state space processes in the recent preprint~\cite{benari2024}, using very similar methods.
We emphasize that in the classical Galton-Watson case, much more precise results are known (we recommend to the interested reader the paper~\cite{Maillard2018}, where the author proves a complete result for Galton-Watson processes and gives, in Section~3, a detailed exposition of the history of the problem, with its link to the identification of the Martin boundary of the classical Galton-Watson branching process). We also refer the reader to~\cite{FerrariKestenEtAl1995}, where a general existence result for quasi-stationary distributions is obtained by a renewal/compactness argument. 

\medskip
We consider now the problem of existence of a finitely generated set of quasi-stationary distributions satisfying an integrability assumption and of convergence of conditional laws toward a quasi-stationary distribution. Since $\mathfrak M$ is concave on $\mathbb R_+^p$, it is locally Lipschitz on $\mathbb S^*$. We add the following regularity assumption on $\mathfrak M$, where  we use the notation, for all $I\subset \{1,\ldots,p\}$,
\[
\mathcal Z_I:=\{x\in\mathbb S\text{ such that }\ x_i>0\ \forall i\in I,\ x_i=0\ \forall i\notin I\},
\]
where, here and across the paper, $x_i$ denotes the $i^{\text{th}}$ coordinate of $x$.

\medskip\noindent \textbf{Assumption~(C).}
For all $I\neq \emptyset$, $\mathfrak M$ is uniformly continuous over $\mathcal Z_I$.

\medskip
We also consider the  following moment condition, related to the exponential convergence parameter $\utheta$.



\medskip\noindent\textbf{Assumption~(M).}
There exists $\momentexpo>1$ such that $(\lambda^*)^\momentexpo<\utheta$ and such that $\EE(V_{i,j}^\momentexpo) <+\infty$ for all $i,j$. 

\medskip We emphasize that Assumption~(M) implies $\lambda^*<1$,  so that the following results apply in the setting of the sub-critical bGWbp. This assumption also requires implicitly that $\theta_0>0$, which is the case in many situations.

\medskip

The proof of the following theorem is detailed in Section~\ref{sec:proof-of-thm:finitely-many-QSDs}. It is based on~\cite{ChampagnatVillemonais2022} and makes use of the following function and measure spaces. Given a positive function $\varphi$ on $\mathbb N^p\setminus\{0\}$, we set
\[
L^\infty(\varphi):=\{f:\mathbb N^p\setminus\{0\}\to\mathbb R,\ \|f/\varphi\|_\infty<+\infty\},\text{ endowed with the norm }\|f\|_\varphi=\|f/\varphi\|_\infty
\]
and
\[
\mathcal M(\varphi):=\{\mu\text{ non-negative measure on }\mathbb N^p\setminus\{0\}\text{ such that }\mu(\varphi)<+\infty\}
\]
where $\mu(\varphi)=\int_{\mathbb N^p\setminus\{0\}}\varphi(z)\mu(\mathrm{d} z)$.

Finally, when Assumption~(M) is enforced, we define, for all $a\in(1,\momentexpo)$  such that $(\lambda^*)^a<\theta_0$,
the function $Q_a:\NN^p\setminus\{0\}\longrightarrow \RR_+$  by 
\[
Q_a(z)=\mathcal P(z)^a.
\]
Note that $\inf Q_a>0$.

In the following theorem, we say that a process $Z$ is aperiodic if, for all $z\in \mathbb N^p$, either $\mathbb P_z(\exists n\geq 1, Z_n=z)=0$ or there exists $n_0(z)\geq 0$ such that, $\forall n\geq n_0(z)$, $\mathbb P_z(Z_n=z)>0$.

\begin{theorem}
    \label{thm:finitely-many-QSDs} We assume that Assumptions~(S), (P), (C) and (M) are in place and that the process $Z$ is aperiodic. We fix $a\in(1,\momentexpo)$  such that $(\lambda^*)^a<\theta_0$.
    
    Then
    there exists $\ell\geq 1$ and a family of quasi-stationary distributions $\nu_1,\ldots,\nu_\ell\in \mathcal
    M(Q_a)$ with absorption parameter $\theta_0$ for $Z$ such that any quasi-stationary distribution $\nu\in\mathcal M(Q_a)$ with absorption parameter $\theta_0$  for $Z$ is a convex combination of $\nu_1,\ldots,\nu_\ell$. In addition, there exists    
    a bounded function $j:\mathbb N^p\setminus\{0\}\to \mathbb N$ 
    and there exists, for each $i\in \{1,\ldots,\ell\}$, a non-negative non-identically zero function $\eta_{i}\in L^\infty(Q_a)$   such that, for all $f\in L^\infty(Q_a)$, all $n\geq 1$ and all $z\in
    \mathbb N^p\setminus\{0\}$,
    \begin{equation}
    \label{eq:eta}
    \left|\theta_{0}^{-n} n^{-j(z)} \EE_z(f(Z_n)\mathbf 1_{Z_n\neq 0})-\sum_{i=1}^\ell \eta_{i}(z)\nu_{i}(f)\right|\leq \alpha_{n} Q_a(z)\|f\|_{Q_a},
    \end{equation}
    where $\alpha_{n}$ goes to $0$ when $n\to+\infty$ which does not depend on $f$ nor on $z$ (but may depend on $a$).

   In addition, the set $E:=\{z\in \mathbb N^p\setminus\{0\},\ \sum_{i=1}^\ell\eta_i(z)=0\}$ is finite, and any quasi-stationary distributions for $Z$ in $\mathcal
    M(Q_a)$ with absorption parameter strictly smaller than $\theta_0$ is supported by~$E$.

    Finally, there is no quasi-stationary distribution for $Z$ in $\mathcal
    M(Q_a)$ with absorption parameter strictly larger than $\theta_0$ and, for all $z\in \mathbb N^p\setminus\{0\}$, the conditional distribution $\mathbb P_z(Z_n\in\cdot\mid Z_n\neq 0)$ converges in $\mathcal M(Q_a)$.
    
\end{theorem}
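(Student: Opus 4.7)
The plan is to apply the general Banach-space quasi-stationary framework of~\cite{ChampagnatVillemonais2022} to the sub-Markov semigroup $(P_n f)(z):=\EE_z[f(Z_n)\mathbf 1_{Z_n\neq 0}]$ acting on the weighted supremum space $L^\infty(Q_a)$, and to read off all the conclusions of the theorem from that abstract machinery. The two structural hypotheses to verify are a Lyapunov-type drift inequality for $Q_a$ that pins the effective spectral radius of $P_1$ on $L^\infty(Q_a)$ to $\theta_0$, and a local minorization/accessibility condition on the finite sublevel sets of $Q_a$; aperiodicity is already assumed.

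For the drift, I would look for $\gamma\in(0,\theta_0)$, $C>0$ and a finite set $K\subset \NN^p\setminus\{0\}$ such that
\[
\EE_z[Q_a(Z_1)]\leq \gamma\, Q_a(z)+C\,\mathbf 1_K(z),\quad\forall z\in\NN^p\setminus\{0\}.
\]
The natural target is $\gamma=(\lambda^*)^a$, which is strictly smaller than $\theta_0$ by Assumption~(M). The $a=1$ case is essentially the supermartingale property of $(\lambda^*)^{-n}\mathcal P(Z_n)$ recalled from Lemma~37 of~\cite{FritschVillemonaisEtAl2022}. For $a\in(1,\momentexpo)$ I would combine three ingredients: concavity and positive homogeneity of $\mathcal P$, which give $\mathcal P(y)\leq c\,|y|$; sub-affinity of $\xi$ from~(S), which yields $|Z_1|\leq c'(|W_1|+1)$; and a Rosenthal-type bound on $\EE_z[|W_1|^a]$, the moments being finite thanks to $\EE(V_{i,j}^\momentexpo)<+\infty$ and $a<\momentexpo$. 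Iterating the one-step estimate and using the eigenvalue identification~\eqref{eq:defP} of $\mathcal P$ then pulls the effective rate down to the target $(\lambda^*)^a$ after sufficiently many steps, and a standard telescoping/power-iteration argument converts the iterated drift into the required single-step drift with some $\gamma<\theta_0$.

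For the minorization, concavity and strict positivity of $\mathcal P$ on the compact set $\mathbb S$ provide a positive lower bound $\inf_{\mathbb S}\mathcal P>0$, so that $Q_a(z)\geq c\,|z|^a\to\infty$ and each sublevel set $\{Q_a\leq K\}$ is finite. On such a finite set I need to produce integers $n_1$, a constant $\varepsilon>0$ and a reference measure $\nu_0$ such that $\mathbb P_z(Z_{n_1}\in\cdot\,,\,Z_{n_1}\neq 0)\geq \varepsilon\,\nu_0(\cdot)$ for every $z$ in the sublevel set, or rather the weaker comparison allowing several ergodic classes used in~\cite{ChampagnatVillemonais2022}. Primitivity from~(P) together with aperiodicity provide the core mixing; Assumption~(C) enters to propagate the lower bounds uniformly across the boundary faces $\mathcal Z_I$, where coordinates may vanish. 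With these two hypotheses in place,~\cite{ChampagnatVillemonais2022} directly yields the finite family $\nu_1,\ldots,\nu_\ell$ spanning the cone of quasi-stationary distributions in $\mathcal M(Q_a)$ with absorption parameter $\theta_0$, the non-negative dual functions $\eta_i\in L^\infty(Q_a)$, the bounded Jordan-block exponent $j(z)$, and the quantitative convergence~\eqref{eq:eta}. The absence of quasi-stationary distributions with absorption parameter $>\theta_0$ is then immediate from the definition~\eqref{eq:utheta_def} of $\theta_0$; the finiteness of $E=\{\sum_i\eta_i=0\}$ and the fact that quasi-stationary distributions with smaller absorption parameter must live on $E$ are part of the same spectral output, the finiteness relying again on the Lyapunov bound of Step~1 to force $\sum_i\eta_i$ to be bounded below outside a finite set; and the convergence of the conditional laws in $\mathcal M(Q_a)$ follows from~\eqref{eq:eta} applied to numerator and denominator, the case $z\in E$ being handled by waiting a bounded number of steps for the process to leave the finite set $E$.

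The hard part will be the Lyapunov drift in Step~1 at rate strictly less than $\theta_0$ when $a>1$: concavity of $\mathcal P$ delivers Jensen's inequality in the wrong direction, so one must both control the fluctuations of $W_1$ around its mean through the polynomial moment Assumption~(M) and align the rate with $(\lambda^*)^a$ via the asymptotic representation~\eqref{eq:defP}; balancing these two is the technical heart of the proof. Everything else should follow routinely, either from the abstract theorem in~\cite{ChampagnatVillemonais2022} or from the discreteness of the state space.
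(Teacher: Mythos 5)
Your outline correctly identifies the two ingredients (a Lyapunov drift for $Q_a$ at rate below $\theta_0$, essentially Proposition~\ref{prop:lyap}, and a local minorization on the finite sublevel sets of $Q_a$), but it has a genuine gap at the point you treat as automatic: you claim that, with these in hand, \cite{ChampagnatVillemonais2022} ``directly yields'' the conclusions with absorption parameter $\theta_0$, i.e.\ that the drift ``pins the effective spectral radius of $P_1$ on $L^\infty(Q_a)$ to $\theta_0$''. It does not. The abstract theorem delivers its expansion relative to the maximal survival rate $\bar\theta$ of the processes restricted to the finitely many communication classes $E_1,\ldots,E_{k_0}$ on which the one-step ratio $\EE_z(H(Z_1)\mathbf 1_{Z_1\neq 0})/H(z)$ exceeds the drift level, and a priori one only has $\bar\theta\leq\theta_0$, where $\theta_0$ is the globally defined rate \eqref{eq:utheta_def}. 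Ruling out that the global survival rate is carried by excursions through the ``light'' region or through chains of classes, i.e.\ proving $\bar\theta=\theta_0$, is the main new step of the paper (Lemma~\ref{lem:useful}), established by an induction along the partial order $E_i\prec E_j$ using the drift inequality repeatedly; nothing in your proposal supplies this. Relatedly, the within-class exponential convergence requires Assumption~(E1--4) of \cite{ChampagnatVillemonais2017}, whose condition (E2) demands a second function $\varphi_2$ with rate strictly between the drift rate and the class survival rate; since the drift only gives some $\theta_a<\theta_0$, this is not off-the-shelf either, and the paper needs Proposition~\ref{prop:E-with-theta0} to construct $\varphi_2$ from the survival-rate information alone.

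Two further assertions in your last paragraph are incorrect as stated. First, the absence of quasi-stationary distributions in $\mathcal M(Q_a)$ with parameter strictly larger than $\theta_0$ is not ``immediate from the definition \eqref{eq:utheta_def}'': Theorem~\ref{thm:infinitely-many-QSDs} produces quasi-stationary distributions with every parameter in $[\upsilon_0,1)$, in general exceeding $\theta_0$, so the integrability $\nu(Q_a)<\infty$ must intervene — in the paper this is done by integrating \eqref{eq:eta} with $f\equiv 1$ against $\nu$ and using the boundedness of $j$. Second, the finiteness of $E$ cannot come from the Lyapunov bound, which only controls $\EE_z(Q_a(Z_1)\mathbf 1_{Z_1\neq 0})$ from above; what is needed is a lower bound on survival for large $|z|$, which the paper obtains from primitivity (Theorem~6 of \cite{FritschVillemonaisEtAl2022}) and super-additivity of $\xi$ (for $|z|$ large, $\mathbb P_z(Z_{n_0}\geq z_0')>0$ with $z_0'\geq z_0\notin E$ and survival monotone in the initial state), combined with the fact that $j\equiv 0$ on $E$. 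Finally, a smaller misattribution: Assumption~(C) is used for the uniformity in the direction $z/|z|$ of the drift estimate (via uniform continuity of $\mathcal P$ on the faces $\mathcal Z_I$, Lemma~\ref{lem:useful2}), not for the minorization, which only uses primitivity and aperiodicity on each finite set $K_i$.
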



\begin{remark}
    Note that, under the assumptions of 
Theorem~\ref{thm:finitely-many-QSDs}, we have $\upsilon_0=\utheta$.  
In fact, taking $f\equiv 1$ in~\eqref{eq:eta} leads to $\mathbb{P}_z(Z_n\neq 0)\sim_{n\to \infty} \theta_{0}^{n} n^{j(z)}\sum_{i=1}^\ell \eta_{i}(z)$. Then for all $v>\theta_0$, for all $z\in \mathbb N^p\setminus \{0\}$, we have 
$\mathbb E_z(v^{-T_0}) = \sum_{n\in\mathbb{N}} v^{-n}\left(\mathbb P_z(Z_{n-1}\neq 0)-\mathbb P_z(Z_{n}\neq 0)\right)<\infty$. Then $v_0 \leq \theta_0$, and Proposition~\ref{prop:theta0_leq_v0_lambda} leads to the equality. 
\end{remark}

\medskip Under the additional assumption that the process $Z$ is irreducible (meaning that $\forall x,y\in\mathbb N^p\setminus \{0\}$, there exists $n\geq 0$ such that $\mathbb P_x(Z_n=y)>0$), one obtains that, for all $z\in\mathbb N^p\setminus\{0\}$, 
\begin{align*}
\utheta = \sup\{\theta>0,\ \liminf_{n\to+\infty} \theta^{-n}\mathbb P_z(Z_n\neq 0)>0\},
\end{align*}
does not depend on $z$. In the irreducible case, we have the following result, proved in Section~\ref{sec:lastone}.

\begin{theorem}
    \label{thm:unique-QSD}
     Assume that Assumptions~(S), (P), (C) and (M) hold true and that the process $Z$ is aperiodic and irreducible.  We fix $a\in(1,\momentexpo)$  such that $(\lambda^*)^a<\theta_0$.

      Then the process $Z$ admits a unique quasi-stationary distribution $\nu_{QS}$ in $\mathcal M(Q_a)$. Its absorption parameter $\theta_{QS}$ is equal to $\theta_0$ and there exists a unique positive function $\eta_{QS}$ in $L^\infty(Q_a)$, such  that
    \begin{align}
    \label{eq:thmQSD2}
    \mathbb P_{\nu_{QS}}(Z_n\neq 0)=\theta_0^n \quad \text{ and } \quad \mathbb E_z(\eta_{QS}(Z_n)\mathbf{1}_{Z_n\neq 0})=\theta_0^n\eta_{QS}(z),\quad\forall n\geq 0,\,z\in \mathbb N^p\setminus\{0\}
    \end{align}
    and $\eta_{QS}$ is lower bounded away from $0$.
    In addition, there exists $C>0$ and $\gamma\in(0,1)$ such that, for any probability measure $\mu\in\mathcal M(Q_a)$ and any function $f\in L^\infty(Q_a)$, we have
    \begin{align}
    \label{eq:thmQSD1}
    \left|\theta_0^{-n}\mathbb E_\mu\left(f(Z_n)\mathbf{1}_{Z_n\neq 0}\right)-\mu(\eta_{QS})\nu_{QS}(f)\right|\leq C\gamma^n \mu(Q_a)\,\|f\|_{Q_a},\quad\forall n\geq 0
    \end{align}
    and
    \begin{align}
    \label{eq:thmQSD3}
    \left|\mathbb E_\mu\left(f(Z_n)\mid Z_n\neq 0\right)-\nu_{QS}(f)\right|\leq C\gamma^n\,\mu(Q_a)\,\|f\|_{Q_a},\quad\forall n\geq 0.
    \end{align}
    Finally, for all probability measure $\mu\in \mathcal M(\eta_{QS})$, 
    \begin{align}
    \label{eq:thmQSD4}
    \mathbb P_\mu\left(Z_n\in \cdot\mid Z_n\neq 0\right)\xrightarrow[n\to+\infty]{TV} \nu_{QS},
    \end{align}
where $TV$ refers to the total variation convergence.
\end{theorem}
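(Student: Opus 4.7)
The plan is to derive Theorem~\ref{thm:unique-QSD} as a specialization of Theorem~\ref{thm:finitely-many-QSDs} under the additional irreducibility hypothesis: irreducibility will collapse the finite family of quasi-stationary distributions to a single one and allow us to upgrade the generic remainder $\alpha_n$ in~(\ref{eq:eta}) to a geometric rate $\gamma^n$. I would start by invoking Theorem~\ref{thm:finitely-many-QSDs} to obtain $\ell\geq 1$, the QSDs $\nu_1,\ldots,\nu_\ell\in\mathcal M(Q_a)$, the non-negative eigenfunctions $\eta_1,\ldots,\eta_\ell\in L^\infty(Q_a)$, and the bounded correction $j:\mathbb N^p\setminus\{0\}\to\mathbb N$.

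Next I would establish $\ell=1$, $j\equiv 0$ and $E=\emptyset$ using irreducibility. First, any quasi-stationary distribution $\nu$ with absorption parameter $\theta_0$ must have full support: the QSD identity $\nu P=\theta_0 \nu$ (with $P(x,\cdot):=\mathbb P_x(Z_1\in\cdot,Z_1\neq 0)$) implies that the support of $\nu$ is forward invariant for $P$, which by irreducibility forces it to be all of $\mathbb N^p\setminus\{0\}$. A classical Perron--Frobenius type argument for $P$ acting on $L^\infty(Q_a)$ (where $Q_a$ provides the necessary integrability control through Assumption~(M)) then yields one-dimensionality of the eigenspace for eigenvalue $\theta_0$, hence $\ell=1$ and full positivity of $\eta_{QS}:=\eta_1$, so $E=\emptyset$. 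To prove $j\equiv 0$, I would compare $\mathbb P_x(Z_n\neq 0)$ and $\mathbb P_y(Z_n\neq 0)$ via irreducibility: a two-sided comparison $c_{xy}\mathbb P_y(Z_{n-k}\neq 0)\leq \mathbb P_x(Z_n\neq 0)\leq C_{xy}\mathbb P_y(Z_{n+k}\neq 0)$ forces the integer-valued function $j$ to be constant, and the exact identity $\mathbb P_{\nu_{QS}}(Z_n\neq 0)=\theta_0^n$ combined with~(\ref{eq:eta}) integrated against $\nu_{QS}$ with $f\equiv 1$ then forces this constant to be zero. The uniform lower bound on $\eta_{QS}$ follows from its pointwise positivity, the eigenvalue identity in~(\ref{eq:thmQSD2}), and the Lyapunov structure provided by $Q_a$.

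To obtain the geometric rate $\gamma^n$ in~(\ref{eq:thmQSD1}), I would invoke the irreducible aperiodic version of the framework of~\cite{ChampagnatVillemonais2022}: under Assumption~(M), $Q_a$ acts as a Lyapunov function for the sub-Markov kernel with drift coefficient $(\lambda^*)^a/\theta_0<1$, and irreducibility together with aperiodicity yields a Doeblin minorization on sub-level sets of $Q_a$; the resulting spectral gap for $\theta_0^{-1}P$ on $L^\infty(Q_a)$ directly produces~(\ref{eq:thmQSD1}). Estimate~(\ref{eq:thmQSD3}) then follows from~(\ref{eq:thmQSD1}) applied simultaneously to $f$ and to $f\equiv 1$ after normalization by $\theta_0^{-n}\mathbb P_\mu(Z_n\neq 0)\to \mu(\eta_{QS})>0$, while~(\ref{eq:thmQSD4}) follows by approximating $\mu\in\mathcal M(\eta_{QS})$ by finitely supported probability measures, which automatically lie in $\mathcal M(Q_a)$, and by exploiting the uniformity in~(\ref{eq:thmQSD3}). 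The main obstacle will be the Perron--Frobenius step reducing $\ell$ to $1$ and trivializing $j$ on one hand, and the upgrade from generic sub-exponential convergence to a true geometric rate on the other; both rely essentially on the combination of irreducibility, aperiodicity, and the Lyapunov control provided by $Q_a$ under Assumption~(M).
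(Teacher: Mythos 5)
Your route differs from the paper's: the paper does not pass through Theorem~\ref{thm:finitely-many-QSDs} at all, but instead verifies directly Assumption~(E) of~\cite{ChampagnatVillemonais2017} (conditions (E1)--(E4), recalled in the Appendix) for $Z$ with $\varphi_1=Q_a/\inf Q_a$ and an explicitly constructed $\varphi_2$, and then quotes Theorems~2.1, 2.3 and Corollaries~2.7, 2.11 of that reference. Your plan could in principle be made to work, but as written it has a genuine gap at its central step. You claim that ``$Q_a$ acts as a Lyapunov function with drift coefficient $(\lambda^*)^a/\theta_0<1$, and irreducibility together with aperiodicity yields a Doeblin minorization on sub-level sets of $Q_a$; the resulting spectral gap for $\theta_0^{-1}P$ on $L^\infty(Q_a)$ directly produces~\eqref{eq:thmQSD1}''. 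For absorbed (sub-Markov) kernels, a Lyapunov drift $\theta_a<\theta_0$ plus a minorization on a finite set is \emph{not} sufficient to produce convergence at the rate $\theta_0$: one must in addition relate the abstractly defined parameter $\theta_0$ (a $\liminf$ in the definition~\eqref{eq:utheta_def}) to a quantitative lower bound on survival from the small set valid for \emph{all} $n$, i.e.\ produce the second function $\varphi_2$ with $\mathbb E_z(\varphi_2(Z_1)\mathbf 1_{Z_1\neq 0})\geq\theta_2\varphi_2(z)$, $\theta_2>\theta_a$, and the Harnack-type comparison (E3). This is exactly where the paper's work lies: $\varphi_2$ is built from $\sum_k\theta_2^{-k}\mathbb P_z(Z_k\neq 0)$ using the definition of $\theta_0$, irreducibility, and crucially the super-additivity of $\xi$ (stochastic monotonicity in the initial condition, to pass from the basis vectors $e_i$ to a uniform bound over all $z$), while (E3) is obtained from the minorization and the monotonicity of $n\mapsto\mathbb P_x(Z_n\neq 0)$; alternatively one needs the content of Proposition~\ref{prop:E-with-theta0}. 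Your sketch never identifies this step, and asserting the gap ``directly'' begs the question.

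Two further points are under-justified. First, your Perron--Frobenius argument for $\ell=1$ and $j\equiv 0$: classical Perron--Frobenius does not apply on the infinite irreducible state space $\mathbb N^p\setminus\{0\}$, and at the level of the \emph{statement} of Theorem~\ref{thm:finitely-many-QSDs} nothing forces the $\nu_i$ to coincide; in the paper uniqueness comes out of Theorem~2.1 of~\cite{ChampagnatVillemonais2017} once Assumption~(E) is verified, so this step is really the same missing machinery as above. Second, your justification of the uniform lower bound on $\eta_{QS}$ (``positivity $+$ Lyapunov structure'') is not enough: the paper obtains it from the monotonicity of $z\mapsto\mathbb P_z(Z_n\neq 0)$ given by super-additivity; and your approximation argument for~\eqref{eq:thmQSD4} is delicate because $\mathcal M(\eta_{QS})$ contains measures with $\mu(Q_a)=+\infty$, so the tail contribution cannot be controlled by~\eqref{eq:thmQSD3} alone (the paper invokes Corollary~2.11 of~\cite{ChampagnatVillemonais2017} precisely for this).
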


Theorem~\ref{thm:unique-QSD} states the existence and the uniqueness of a quasi-stationary distribution $\nu_{QS}$ integrating the Lyapunov function $Q_a$.
Moreover, \eqref{eq:thmQSD3} and \eqref{eq:thmQSD4} establish the convergence of the law of the process conditioning to the non-extinction towards $\nu_{QS}$ for all initial distribution $\mu$ integrating $Q_a$ or the eigenfunction $\eta_{QS}$, with moreover an exponential speed of convergence for $\mu$ integrating $Q_a$. In particular, the convergence towards $\nu_{QS}$ holds for any Dirac mass $\mu=\delta_z$, with $z\in E$, meaning that $\nu_{QS}$ is a Yaglom limit.
In addition, \eqref{eq:thmQSD1} gives the convergence of the law of the process without conditioning. In particular, taking $f\equiv 1$ in \eqref{eq:thmQSD1} leads to the speed of convergence of the extinction probability towards 0 
at speed exactly $\theta_0$.

\medskip

The proofs of the last two results rely in part on recent advances in the theory of quasi-statio\-nary distributions, which allow to derive several properties from Foster-Lyapunov type criteria, given in our case by the following proposition. 


\begin{proposition}
    \label{prop:lyap}
    If Assumptions~(S), (P), (C) and (M) hold true, then, for any $a\in(1,\momentexpo)$ such that $(\lambda^*)^a<\theta_0$, we have
    \begin{equation}\label{ineq:QSD}
    \EE(Q_a(Z_1)\mathbf{1}_{Z_1\neq 0} |Z_0=z)\leq \theta_{a} Q_a(z)+C_a,\: \forall z\in\NN^p\setminus \{0\}
    \end{equation}
    for some constants $\theta_a\in [0,\theta_0)$ and $C_a>0$.
\end{proposition}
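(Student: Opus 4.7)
My plan is to reduce Proposition~\ref{prop:lyap} to the asymptotic ratio
\[
\lim_{|z|\to\infty}\frac{\EE_z\bigl(\mathcal P(Z_1)^a\,\mathbf{1}_{Z_1\neq 0}\bigr)}{Q_a(z)}=(\lambda^*)^a.
\]
Since Assumption~(M) gives $(\lambda^*)^a<\theta_0$, once this ratio limit is established I can pick any $\theta_a\in\bigl((\lambda^*)^a,\theta_0\bigr)$ and find $M>0$ such that the left-hand side of~\eqref{ineq:QSD} is at most $\theta_a Q_a(z)$ for all $|z|\geq M$; the finitely many states with $0<|z|<M$ contribute only a uniform constant $C_a$ (all those expectations are finite by~(M)), which yields the Foster--Lyapunov inequality on all of $\NN^p\setminus\{0\}$.

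The ratio limit itself rests on a law-of-large-numbers argument pushed through the mating mechanism. Given any sequence $z^{(n)}$ with $|z^{(n)}|\to\infty$, I would extract a subsequence with $z^{(n)}/|z^{(n)}|\to\tilde z\in\mathbb S$ and couple the corresponding $W_1$'s on a common probability space via
\[
\frac{W_{1}^{(n)}}{|z^{(n)}|}=\sum_{i=1}^{p}\frac{z^{(n)}_i}{|z^{(n)}|}\cdot\frac{1}{z^{(n)}_i}\sum_{k=1}^{z^{(n)}_i}V^{(k,i)}_{\cdot}
\]
(with the convention that the inner average is $0$ when $z^{(n)}_i=0$). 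The usual LLN on coordinates with $\tilde z_i>0$, together with a negligibility estimate on coordinates with $\tilde z_i=0$, gives $W_{1}^{(n)}/|z^{(n)}|\to \tilde z\mathbb V$ in probability. Invoking the definition of $\mathfrak M$, the super-additivity and sub-affinity in~(S), and the uniform continuity granted by~(C), this upgrades to $\xi(W_{1}^{(n)})/|z^{(n)}|\to\mathfrak M(\tilde z)$ in probability. Then, via the eigenfunction identity $\mathcal P\circ\mathfrak M=\lambda^*\mathcal P$ (immediate from~\eqref{eq:defP}) together with positive homogeneity and continuity of $\mathcal P$, one obtains $\bigl(\mathcal P(\xi(W_{1}^{(n)}))/|z^{(n)}|\bigr)^a\to (\lambda^*)^a\mathcal P(\tilde z)^a$ in probability, while $Q_a(z^{(n)})/|z^{(n)}|^a\to \mathcal P(\tilde z)^a>0$, giving the announced ratio independently of $\tilde z$.

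The remaining ingredient is uniform integrability of $\bigl(\mathcal P(\xi(W_{1}))/|z|\bigr)^a$, which is precisely where Assumption~(M) enters. By monotonicity and positive homogeneity of $\mathcal P$ (both consequences of concavity, super-additivity and $\mathcal P(0)=0$) combined with the sub-affinity $\xi(x)\leq \alpha|x|+\beta$ of~(S), one has $\mathcal P(\xi(W_1))^a\leq C(|W_1|+1)^a$; since $W_1$ is a sum of $|z|$-many independent vectors with finite $\momentexpo$-moment and $a<\momentexpo$, a Bahr--Esseen / Marcinkiewicz--Zygmund bound yields $\EE_z(|W_1|^a)=O(|z|^a)$ uniformly in $z$, turning the above in-probability convergences into convergence of expectations. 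I expect the main obstacle to be the continuity step $\xi(W_1^{(n)})/|z^{(n)}|\to\mathfrak M(\tilde z)$ when $\tilde z$ lies in a low-dimensional stratum $\mathcal Z_I$: near such boundary points of $\mathbb S$, $\mathfrak M$ is not a priori continuous, and Assumption~(C) has to be used delicately to handle this case. By contrast, the $L^a$ bound on $|W_1|$ is routine once (M) is in place.
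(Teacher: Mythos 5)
Your overall reduction (an asymptotic bound on $\mathbb E_z(Q_a(Z_1)\mathbf 1_{Z_1\neq 0})/Q_a(z)$ for large $|z|$, with the finitely many small states absorbed into $C_a$) matches the paper's strategy, but the way you propose to prove the large-$|z|$ asymptotics has a genuine gap, and it sits exactly at the point you flag as ``delicate''. Extracting a subsequence with $z^{(n)}/|z^{(n)}|\to\tilde z\in\mathbb S$ forces you to evaluate $\mathfrak M$ and $\mathcal P$ at a limit point $\tilde z$ that may lie in a strictly smaller stratum $\mathcal Z_I$ than the supports of the $z^{(n)}$, and the two limits your argument then needs, namely $\xi(W_1^{(n)})/|z^{(n)}|\to\mathfrak M(\tilde z)$ in probability and $Q_a(z^{(n)})/|z^{(n)}|^a\to\mathcal P(\tilde z)^a$, require continuity of $\mathfrak M$ and $\mathcal P$ \emph{across} strata. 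The assumptions do not provide this: Assumption~(C) gives uniform continuity on each $\mathcal Z_I$ separately, and a positively homogeneous concave $\mathcal P$ can genuinely jump at stratum boundaries (concavity only gives that the limit from the interior dominates the boundary value), so the claimed ratio limit $(\lambda^*)^a$ along arbitrary sequences is not established and need not hold. The paper avoids this issue structurally: the LLN limit is proved only along a fixed direction $z$ (Step~1), where the stratum of $\frac1n\xi(\cdot)$ is pinned to that of $\mathfrak M(z)$ by a two-sided argument ($J\supset I$ from the almost sure convergence, $J\subset I$ from $\mathbb E[Z_1\mid Z_0=\lfloor nz\rfloor]\leq n\,\mathfrak M(z)$), and uniformity in $z$ is then obtained stratum by stratum (Step~2) using the stochastic monotonicity of $Z_1$ in $Z_0$ (superadditivity of $\xi$), monotonicity of $Q_a$, a finite cover of $\mathcal Z_I$ by dominating points $z^k\in(1+\varepsilon_1)\mathcal Z_I$, and the uniform continuity of $\mathcal P$ on $\mathcal Z_I$ (Lemma~\ref{lem:useful2}). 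This monotonicity/covering device, absent from your proposal, is what replaces the cross-strata continuity you would otherwise need; without it, the subsequence-compactness reduction does not go through.

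A secondary, fixable point: a bound $\mathbb E_z(|W_1|^a)=O(|z|^a)$ is not sufficient to upgrade convergence in probability of $\bigl(\mathcal P(\xi(W_1))/|z|\bigr)^a$ to convergence of expectations, since a same-order bound on the $a$-th moment does not give uniform integrability of the $a$-th power. You need boundedness in $L^r$ for some $r>1$ of that quantity, e.g. boundedness of $\mathcal P(\xi(W_1))/|z|$ in $L^{\momentexpo}$ with $\momentexpo>a$, which follows directly from Minkowski's inequality and Assumption~(M) (this is how the paper argues); with that correction the uniform integrability step is fine, but it does not repair the main gap above.
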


\begin{remark}
    We already observed that $\theta_0\leq \lambda^*$.  One also checks that, at least under the assumptions of Theorem~\ref{thm:unique-QSD}, 
    if $\mathcal P$ is not linear, then $\utheta<\lambda^*$. Indeed, under these assumptions, let $\nu_{QS}$ be the quasi-stationary distribution from Theorem~\ref{thm:unique-QSD}, so that
    \begin{align*}
        \theta_0^n \nu_{QS} (\mathcal P)=\sum_{z\in\mathbb N^p\setminus\{0\}} \nu_{QS}(\{z\}) \mathbb E_z\left(\mathcal P (Z_n)\right).
    \end{align*}
    Since under the assumptions of Theorem~\ref{thm:unique-QSD}, the process is irreducible, the support of $\nu_{QS}$ is $\mathbb N^p\setminus\{0\}$ and hence $\mathcal P$ is concave not affine on the support of $\nu_{QS}$. Using~\eqref{eq:thmQSD3}, we deduce that there exists $z_0\in \mathbb N^p\setminus\{0\}$ and $n\geq 1$ such that $\mathcal P$ is concave not affine on the support of $\mathbb P_{z_0}(Z_n\in\cdot)$, so that, by Jensen (strict) inequality, 
   \[
   \mathbb E_{z_0}\left(\mathcal P(Z_n)\right)< \mathcal P(\mathbb E_{z_0}(Z_n))\leq \mathcal P(\mathfrak M^n(z_0))=(\lambda^*)^n \mathcal P(z_0),
   \]
    where we used $\mathbb E_{z_0}(Z_n)\leq \mathfrak M^n(z_0)$ (see Lemma~27 in~\cite{FritschVillemonaisEtAl2022}) and the fact that $\mathcal P$ is superadditive (since $\mathfrak M$ is superadditive by Assumption~(S) and by definition of $\mathcal P$ given by~\eqref{eq:defP}) for the last inequality, and the definition of $\mathcal P$ (see \eqref{eq:defP}) for the last equality. Similarly, we have 
    $\mathbb E_{z}\left(\mathcal P(Z_n)\right)\leq (\lambda^*)^n \mathcal P(z)$ for all $z\in\mathbb N^p\setminus \{0\}$. We conclude that
    \begin{align*}
        \theta_0^n \nu_{QS} (\mathcal P) < \sum_{z\in\mathbb N^p\setminus\{0\}} \nu_{QS}(\{z\}) (\lambda^*)^n \mathcal P (z) =(\lambda^*)^n  \nu_{QS} (\mathcal P).
    \end{align*}
    This implies that $\theta_0< \lambda^*$ when $\mathcal P$ is not linear.

    It is also known that $\mathfrak M(z)=\sup_{k\geq 1} \frac1k \mathbb E_{kz}(Z_1)$ (see Corollary~7 in~\cite{FritschVillemonaisEtAl2022}), and the same calculation as above shows that, if $\mathbb E_{z_0}(Z_1) <\mathfrak M(z_0)$ for some $z_0\in\mathbb N^p$, then $\utheta<\lambda^*$ (even if $\mathcal P$ is linear).
\end{remark}

\begin{remark}    
    When $\utheta=\lambda^*$, 
    Assumption~(M)
    requires polynomial moments for $V$, with exponents that can be arbitrarily close to $1$. 
We leave as an open question whether, in this case, $L\log L$ type criteria can be obtained, as it is the case in the classical Galton-Watson case (for which $\utheta=\lambda^*$).

In the situation where $\utheta<\lambda^*$, one can not choose $\momentexpo$ arbitrarily close to $1$ in Assumption~(M), since it imposes $\momentexpo > \frac{|\log \utheta|}{|\log \lambda^*|}$. We leave as an open question whether $\EE(V_{i,j}^\momentexpo) <+\infty$ for some $\momentexpo \leq  \frac{|\log \utheta|}{|\log \lambda^*|}$ may be sufficient to obtain the conclusion of Theorem~\ref{thm:finitely-many-QSDs} or~\ref{thm:unique-QSD}, at least in some particular cases.  
\end{remark}

In Section~\ref{sec:infinity}, we state a general result for the existence of infinitely many quasi-stationary distributions, then we prove Proposition~\ref{prop:theta0_leq_v0_lambda} and Theorem~\ref{thm:infinitely-many-QSDs}. Section~\ref{sec:proof_prop_lyap} is devoted to the proof of Proposition~\ref{prop:lyap}.
Theorems~\ref{thm:finitely-many-QSDs} and \ref{thm:unique-QSD} are proved in Sections~\ref{sec:proof-of-thm:finitely-many-QSDs} and \ref{sec:lastone} respectively. For the proofs of the last two theorems, we make use of Assumption~(E) of~\cite{ChampagnatVillemonais2017} which is recalled in Appendix. In Appendix we also state a general result implying Assumption~(E) of~\cite{ChampagnatVillemonais2017}. 

\section{A simple criterion for the existence of infinitely many quasi-sta\-tionary distributions and application to the bGWbp}
\label{sec:infinity}


In Section~\ref{sec:abstract}, we first state and prove  a result on the existence of a continuum of quasi-stationary distributions for general sub-Markov kernels on Polish spaces. In Section~\ref{sec:infinity}, we apply this result to the bGWbp.

\subsection{General setting}
\label{sec:abstract}
Let $(X,d)$ be a Polish space endowed with a $\sigma$-algebra $\mathcal X$ that contains the toplogy induced by $d$.  Let $K$ be a sub-Markov kernel on $X$, which means that $K$ is a non-negative kernel on $X$ such that $K(x,X)\leq 1$ for all $x\in X$. We assume that $K$ has Feller regularity, meaning that the associated operator $K:f\mapsto Kf:=\int_X K(\cdot,\mathrm dy) f(y)$ is a well defined functional from $C_b(X)$ to $C_b(X)$ (the set of continuous bounded functions on $X$). We say that a probability measure $\mu$ on $X$ is a quasi-stationary distribution for $K$ if, for some $\lambda\in(0,1]$,
\begin{align*}
\int_X \mu(\mathrm dx) K(x,\cdot)=\lambda \mu(\cdot).
\end{align*}
Our aim is to prove that $K$ admits infinitely many quasi-stationary distributions under mild assumptions.

In the following theorem, we say that a sequence $(x_n)_{n\in\mathbb N}\in X^{\mathbb N}$ tends to infinity if it eventually leaves any compact set: for all compact subset $L$ of $X$, there exists $n_L\geq 0$ such that, for all $n\geq n_L$, $x_n\notin L$. Similarly, we say that a real functional on $X$ tends to infinity when $x\to+\infty$ if, for all $R>0$, there exists a compact subset $L_R$ of $X$ such that the functional is bounded below by $R$ on $X\setminus L_R$. We also define the iterated kernels $K^n$ as usual by
\begin{align*}
K^0(x,A)=\mathbf 1_{x\in A},\ K^{\ell+1}(x,A)=\int_{X} K^\ell(x,\mathrm dy) K(y,A)\ \forall x\in X, A\in \mathcal X, \ell\geq 0.
\end{align*}


\begin{theorem}
    \label{thm:infinity}
    Assume that there exists $a>0$ and  $(x_n)_{n\in\mathbb N}\in X^{\mathbb N}$ tending to infinity such that
    \begin{align}
    \label{eq:thm:infinity1}
    \sum_{\ell=0}^\infty a^{-\ell} K^\ell(x_n,X) <+\infty,\ \forall n\geq 0
    \end{align}
    and let
    \begin{align}
        a_0:=\inf \{a>0,\text{ s.t. there exists $(x_n)_{n\in\mathbb N}\in X^{\mathbb N}$ tending to infinity such that~\eqref{eq:thm:infinity1} holds true.}\}
    \end{align}
    Assume also that there exists $a_1>a_0$ such that
    \begin{align}
    \label{eq:thm:infinity2}
    \sum_{\ell=0}^\infty a_1^{-\ell} K^\ell(x,X)\xrightarrow[x\to+\infty]{}+\infty.
    \end{align}
    Then $K$ admits an infinite family of quasi-stationary distributions 
    $(\mu_\lambda)_{\lambda\in [a_0,a_1)}$, with $\int_X \mu_\lambda(\mathrm dx) K(x,\cdot)=\lambda \mu_\lambda$ for all $\lambda\in [a_0,a_1)$.
\end{theorem}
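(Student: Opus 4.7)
The key construction is to take candidate quasi-stationary distributions as weak limits of normalized $\lambda$-resolvent measures
\[
\mu_x^\lambda := \frac{\sigma_x^\lambda}{\sigma_x^\lambda(X)}, \qquad \sigma_x^\lambda := \sum_{\ell \geq 0} \lambda^{-\ell}\, \delta_x K^\ell,
\]
started from a sequence escaping to infinity. Writing $G_\lambda(x) := \sigma_x^\lambda(X) = \sum_\ell \lambda^{-\ell} K^\ell(x,X)$, the elementary identity $\sigma_x^\lambda K = \lambda(\sigma_x^\lambda-\delta_x)$ yields, after normalization,
\[
\mu_x^\lambda K = \lambda\, \mu_x^\lambda - \frac{\lambda}{G_\lambda(x)}\,\delta_x.
\]
For fixed $\lambda \in (a_0,a_1)$, the definition of $a_0$ combined with the monotonicity $G_{a'}\leq G_a$ for $a\leq a'$ lets me pick a sequence $(x_n)$ tending to infinity with $G_\lambda(x_n)<\infty$; since $\lambda<a_1$, assumption~\eqref{eq:thm:infinity2} forces $G_\lambda(x_n)\geq G_{a_1}(x_n)\to+\infty$, so the error term has vanishing total mass. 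Up to tightness of $(\mu_n):=(\mu_{x_n}^\lambda)$, any weak subsequential limit $\mu_\lambda$ will satisfy $\mu_\lambda K = \lambda \mu_\lambda$, using Feller regularity ($f\in C_b \Rightarrow Kf \in C_b$) to pass to the limit against bounded continuous test functions.

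The crucial step, and the one I expect to be the main obstacle, is tightness: without it the weak limit could leak mass to infinity. I would bound $\mu_n(G_{a_1})$ uniformly, exploiting the fact that $G_{a_1}$ itself diverges at infinity. The algebra: exchanging the order of summation,
\[
\sigma_x^\lambda(G_{a_1}) = \sum_{j\geq 0} K^j(x,X)\sum_{k=0}^{j} \lambda^{-k}a_1^{-(j-k)} \leq \frac{a_1}{a_1-\lambda}\, G_\lambda(x),
\]
the final inequality coming from a geometric sum estimate valid because $\lambda<a_1$. Dividing by $G_\lambda(x_n)$ gives $\mu_n(G_{a_1})\leq a_1/(a_1-\lambda)$. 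Since $G_{a_1}$ is measurable and tends to infinity by~\eqref{eq:thm:infinity2}, Markov's inequality produces, for every $\varepsilon>0$, a compact set $L_\varepsilon$ with $\mu_n(L_\varepsilon)\geq 1-\varepsilon$ uniformly in $n$. Prokhorov's theorem on the Polish space $X$ then delivers a weak subsequential limit $\mu_\lambda$, which is a genuine probability measure, and passage to the limit in the displayed identity against $f\in C_b(X)$ gives $\mu_\lambda K = \lambda \mu_\lambda$.

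Two small remaining items. First, for the endpoint $\lambda = a_0$, which may or may not lie in the set defining $a_0$, I would take $\lambda_k \downarrow a_0$ and extract a weakly convergent subsequence from the already constructed family $(\mu_{\lambda_k})$: the tightness constant $a_1/(a_1-\lambda_k)$ stays uniformly bounded as $\lambda_k\to a_0<a_1$, so tightness survives, and passing to the limit in $\mu_{\lambda_k}K=\lambda_k\mu_{\lambda_k}$ yields $\mu_{a_0}K = a_0 \mu_{a_0}$. Second, the family $(\mu_\lambda)_{\lambda\in[a_0,a_1)}$ is indeed infinite because the eigenvalue is intrinsic to the measure: if $\mu_\lambda = \mu_{\lambda'}$, then $\lambda\mu_\lambda = \mu_\lambda K = \lambda'\mu_\lambda$ forces $\lambda=\lambda'$. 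The whole scheme hinges on the tightness inequality and on the Feller property, which together convert the divergence of $G_{a_1}$ at infinity into the compactness needed for the candidate measures.
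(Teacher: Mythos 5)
Your proof is correct and follows essentially the same route as the paper: normalized $\lambda$-resolvent measures seeded at points escaping to infinity, the uniform bound $\mu_n(G_{a_1})\leq a_1/(a_1-\lambda)$ giving tightness via the divergence of $G_{a_1}$ and Prokhorov, the Feller property to pass to the limit in the eigen-identity with vanishing Dirac error, and a second limiting argument for the endpoint $\lambda=a_0$. The only cosmetic difference is that you obtain the domination bound by a direct Fubini computation, whereas the paper iterates the sub-invariance inequality $\mu_{\lambda,n}K^\ell\leq\lambda^\ell\mu_{\lambda,n}$; the two computations are equivalent.
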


\begin{proof}[Proof of Theorem~\ref{thm:infinity}]
    Fix $\lambda\in (a_0,a_1)$. Since $\lambda> a_0$ then there exists $(x_n)_{n\in\mathbb N}\in X^{\mathbb N}$ tending to infinity such that, for all $n\geq 0$, the probability measure
    \begin{align*}
    \mu_{\lambda,n} =\frac{\sum_{\ell=0}^\infty \lambda^{-\ell} K^\ell(x_n,\cdot)}{\sum_{\ell=0}^\infty \lambda^{-\ell} K^\ell(x_n,X)}
    \end{align*}
    is well defined.
    We have, for all $n\geq 0$,
    \begin{align}
    \mu_{\lambda,n} K (\cdot)&:=\int_X \mu_{\lambda,n}(\mathrm dx) K(x,\cdot)=\frac{\sum_{\ell=0}^\infty \lambda^{-\ell} K^{\ell+1}(x_n,\cdot)}{\sum_{\ell=0}^\infty \lambda^{-\ell} K^\ell(x_n,X)}\notag \\
    &=\frac{\lambda\sum_{\ell=0}^\infty \lambda^{-(\ell+1)} K^{\ell+1}(x_n,\cdot)}{\sum_{\ell=0}^\infty \lambda^{-\ell} K^\ell(x_n,X)}\notag \\
    &=\frac{\lambda\sum_{\ell=0}^\infty \lambda^{-\ell} K^{\ell}(x_n,\cdot)-\lambda\delta_{x_n}(\cdot)}{\sum_{\ell=0}^\infty \lambda^{-\ell} K^\ell(x_n,X)}\label{eq:munK} \\
    &\leq \lambda \mu_{\lambda,n}(\cdot).\notag
    \end{align}
    We deduce that, for all $\ell\geq 1$,
    \begin{align*}
    \mu_{\lambda,n} K^\ell\leq \lambda^\ell \mu_{\lambda,n}.
    \end{align*}
    In particular, we obtain that 
    \begin{align}
    \label{eq:domination}
    \int_X \mu_{\lambda,n}(\mathrm dx)\sum_{\ell=0}^\infty a_1^{-\ell} K^\ell(x,X)=\sum_{\ell=0}^\infty a_1^{-\ell} \mu_{\lambda,n} K^\ell(X)\leq \sum_{\ell=0}^\infty \left(\frac{\lambda}{a_1}\right)^{\ell}=\frac{a_1}{a_1-\lambda}.
    \end{align}
    Since we assumed that $\sum_{\ell=0}^\infty a_1^{-\ell} K^\ell(x,X)$ goes to infinity when $x\to\infty$, this and Prokhorov's Theorem imply that the sequence of measures $(\mu_{\lambda,n})_{n\in\mathbb N}$ is relatively compact for the weak convergence topology.
    
    Let $\mu_\lambda$ be any adherent point of $(\mu_{\lambda,n})_{n\in\mathbb N}$, and observe that $\mu_\lambda$ is a probability measure. Since $K$ is assumed to preserve bounded continuous functions, we deduce that, up to a subsequence and for the weak convergence topology,
    \begin{align*}
    \mu_{\lambda} K = \lim_{n\to+\infty} \mu_{\lambda,n} K.
    \end{align*}
    But we computed (see~\eqref{eq:munK})
    \begin{align*}
    \mu_{\lambda,n} K= \lambda \mu_{\lambda,n} -\frac{\lambda\delta_{x_n}}{\sum_{\ell=0}^\infty \lambda^{-\ell} K^{\ell}(x_n,X)},
    \end{align*}
    where, by assumption, 
    \begin{align*}
    \sum_{\ell=0}^\infty \lambda^{-\ell} K^{\ell}(x_n,X)\geq \sum_{\ell=0}^\infty a_1^{-\ell} K^{\ell}(x_n,X)\xrightarrow[n\to+\infty]{} +\infty.
    \end{align*}
    We deduce that $\lim_{n\to+\infty} \mu_{\lambda,n} K=\lim_{n\to+\infty}\lambda \mu_{\lambda,n}$, which is equal to $\lambda \mu_{\lambda}$. We thus proved that $\mu_{\lambda} K=\lambda \mu_{\lambda}$, which means that $\mu_{\lambda}$ is a quasi-stationary distribution for $K$ with absorption parameter $\lambda$.

    It remains to prove that there exists a quasi-stationary distribution with absorption parameter $a_0$. Since the set of probability measures $\mu_\lambda$, $\lambda\in (a_0,(a_0+a_1)/2)$, satisfies~\eqref{eq:domination} with a uniform upper bound, we deduce that it is relatively compact. One  checks as above that any limit point, when $\lambda\to a_0$, of this family is a quasi-stationary distribution for $K$ with absorption parameter $a_0$. This concludes the proof of Theorem~\ref{thm:infinity}.
\end{proof}

\begin{remark}
    One easily checks from the proof that the condition that $K$ preserves bounded continuous functions is  stronger than necessary. Actually, it would be sufficient to assume that $K$ sends a generating family of bounded continuous functions into a family of bounded continuous functions. For instance, it sufficient to assume that $K$ sends compactly supported continuous functions to bounded continuous functions.
\end{remark}

\begin{remark}
    One can easily extends this result to the general setting of non-conservative kernels $K$. More precisely, assume that there exists a positive function $h$ on $X$ such that $Kh\leq c h$ for some $c>0$. Then the kernel defined by
    \begin{align*}
    \widetilde K(x,A)=\frac{1}{ c h(x)}\,\int_A K(x,\mathrm dy) h(y),\ \forall x\in X,\ \forall A\in\mathcal X,
    \end{align*}
    is a sub-Markov kernel to which Theorem~\ref{thm:infinity} may apply. Note that the assumptions would translate into: for any continuous function $f:X\to \mathbb R$, $x\in X\mapsto \frac{1}{ h(x)}\int_X K(x,\mathrm dy) h(y) f(y)$ is continuous; for all $n\geq 0$, $\sum_{\ell=0}^\infty a_0^{-\ell} \int_X K^\ell(x_n,\mathrm dy) h(y)<+\infty$ and $\frac{1}{h(x)} \sum_{\ell=0}^\infty a_1^{-\ell} \int_X K^\ell(x,\mathrm dy) h(y)\xrightarrow[x\to+\infty]{}+\infty$. In addition, the resulting quasi-stationary measures may not be finite measures if $h$ is not lower bounded (we only obtain that $\int_X \mu_\lambda(\mathrm dy)h(y)<+\infty$ for all $\lambda\in[a_0,a_1)$). On the other hand, if $h$ is lower bounded away from $0$, then $\mu_\lambda$ is a finite measure. 
\end{remark}

\subsection{Proof of Proposition~\ref{prop:theta0_leq_v0_lambda} and Theorem~\ref{thm:infinitely-many-QSDs}}
\label{sec:proof_thm_infinitely-many-QSDs}
In the context of bGWbp, we choose $K(x,\mathrm dy)=\mathbb P_x(Z_1\in\mathrm dy,\,Z_1\neq 0)$ for all $x\in X=\mathbb N^p\setminus\{0\}$. We first prove Proposition~\ref{prop:theta0_leq_v0_lambda}, as well as that $a_0\leq \upsilon_0$, where $a_0$ is defined in Theorem~\ref{thm:infinity} and $\upsilon_0$ is defined in~\eqref{eq:upsilon0-def}. Then we prove Theorem~\ref{thm:infinitely-many-QSDs}.

First, from their definitions, it is clear that $\upsilon_0\leq 1$ and $\theta_0\leq 1$. For all $\theta<\theta_0$, there exists $v\in(\theta,1)$ and $z\in \mathbb{N}^p\setminus\{0\}$ such that $\liminf_{n\to \infty} v^{-n}\mathbb{P}_z(Z_n\neq 0)>0$.  For such $v$ and $z$, we have $\mathbb{E}_z(v^{-T_0})= \sum_{\ell=0}^\infty v^{-\ell} \mathbb{P}_z\left(T_0=\ell\right)$ with
\begin{align*}
\liminf_{\ell \to \infty}v^{-\ell} \mathbb{P}_z\left(T_0=\ell\right)
	&= \liminf_{\ell \to \infty} v^{-\ell} \left(\mathbb{P}_z(Z_{\ell-1} \neq 0) -\mathbb{P}_z(Z_{\ell} \neq 0)\right)\\
	&=   (v^{-1}-1)\liminf_{\ell \to \infty}v^{-\ell}\mathbb{P}_z(Z_{\ell} \neq 0)
	>0.
\end{align*}
Then $\mathbb{E}_z(v^{-T_0})=\infty$ and hence $v\geq \upsilon_0$. We deduce that $\upsilon_0\geq\theta$ and hence $\upsilon_0\geq \theta_0$.


In order to prove that $a_0\leq\upsilon_0$, we first observe that, for  all $a>\upsilon_0$, there exists $v\leq 1$, such that $v<a$ and for all $z\in\mathbb N^p\setminus\{0\}$, $\mathbb E_z(v^{-T_0})<\infty$. For such $v$ we then obtain
\begin{align*}
\sum_{\ell\geq 0}a^{-\ell}K^{\ell}(z,X)=\sum_{\ell\geq 0}a^{-\ell}\mathbb P_z(T_0\geq \ell+1)\leq \sum_{\ell\geq 0}a^{-\ell}v^{\ell+1}\mathbb E_z(v^{-T_0})\leq C_{a,v} \mathbb E_z(v^{-T_0})<\infty.
\end{align*}
This shows that $a_0\leq \upsilon_0$.


Using the fact that $\mathcal P$ is lower bounded away from $0$ on $\mathbb N^p\setminus\{0\}$ 
(as $\mathcal P$ is strictly positive and positively homogeneous) and that $K(\cdot,X)$ is bounded by $1$ and vanishes at $0$
, we deduce that there exists a constant 
$c>0$ such that, for all $\ell\geq 1$ and all $x\in\mathbb N^p$,
\begin{align*}
K^\ell(x,X)\leq \frac1c \mathbb E_x\left(\mathcal P(Z_\ell)\right).
\end{align*}
In addition, $(\lambda^*)^{-n} \mathcal P(Z_n)$ is a supermartingale (see Lemma~37 in~\cite{FritschVillemonaisEtAl2022}), so that, for any $a\in (\lambda^*,1)$,
\begin{align*}
\sum_{\ell=0}^\infty a^{-\ell} K^\ell(x,X) \leq \frac1c\sum_{\ell=0}^\infty a^{-\ell} (\lambda^*)^\ell \mathcal P(x) <+\infty.
\end{align*}
This shows that $\upsilon_0\leq \lambda^*$, then Proposition~\ref{prop:theta0_leq_v0_lambda} holds.

Since $\mathfrak M$ is primitive by Assumption~(P), Theorem~6 in~\cite{FritschVillemonaisEtAl2022} implies that, for all $z\in \mathbb N^p\setminus\{0\}$ and for all $n\geq n_0$, there exists $c_0>0$ and $k_0\geq 1$ such that for all $k\geq k_0$
\begin{align*}
\mathbb P(Z_{n}\geq c_0 (1,\ldots,1)\mid Z_0=k z)>0.
\end{align*}
Applying this result for $z$ taken in each element of the canonical basis of $\mathbb N^p$ and using the super-additivity of $\xi$, one deduces that
, for any $n\geq n_0$, there exist constants $r>0$ and $c'_0>0$ such that
\begin{align*}
\inf_{|z|>r} \mathbb P(Z_{n}\geq c'_0 (1,\ldots,1)\mid Z_0= z)>0.
\end{align*}
 Using again the superadditivity of the model, we obtain that
 , for all $z_1,\ldots,z_\ell$, with $\ell\geq 1$ and $|z_i|>r$,
\begin{align*}
\mathbb P_{z_1+\cdots+z_\ell}(Z_{n}=0)\leq \mathbb P_{z_1}(Z_n=0)\cdots \mathbb P_{z_\ell}(Z_n=0)\leq \big(1- \inf_{|z|>r} \mathbb P(Z_{n}\geq c'_0 (1,\ldots,1)\mid Z_0= z)\big)^\ell.
\end{align*}
and hence
\begin{align*}
\mathbb P_{z}(Z_{n}=0)\xrightarrow[|z|\to+\infty]{} 0.
\end{align*}
In particular, we deduce from Fatou's Lemma that, for any $a_1\in(0,1)$, 
\begin{align*}
\sum_{\ell=0}^\infty a_1^{-\ell} \mathbb P_z(Z_\ell\neq 0)\xrightarrow[|z|\to+\infty]{}+\infty.
\end{align*}
and hence~\eqref{eq:thm:infinity2} holds true for any $a_1\in(a_0,1)\supset (\upsilon_0,1)$.

We deduce that Theorem~\ref{thm:infinity} applies to the subcritical bGWbp. Since quasi-stationary distributions with different absorption parameters are linearly independent, this proves Theorem~\ref{thm:infinitely-many-QSDs}.

\section{Proof of Proposition~\ref{prop:lyap}}
\label{sec:proof_prop_lyap}

We start with the following technical lemma on the regularity of $\mathcal P$.
\begin{lemma}
    \label{lem:useful2}
Assume that Assumptions~(S), (P), (C) and (M) hold true. Then, for all $I\subset \{1,\ldots,p\}$, $I\neq \emptyset$, the operator $\mathcal P$ restricted to $\mathcal Z_I$ is uniformly continuous.
\end{lemma}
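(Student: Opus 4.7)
The plan is to leverage the eigenrelation $\mathcal P\circ \mathfrak M=\lambda^\ast \mathcal P$ on $\mathbb R_+^p$, which follows from~\eqref{eq:defP} by writing $\mathfrak M^{n+1}(z)/(\lambda^\ast)^{n+1}$ either as $\mathfrak M(\mathfrak M^n(z))/(\lambda^\ast)^{n+1}$ or as $\mathfrak M^n(\mathfrak M(z))/(\lambda^\ast)^{n+1}$ and passing to the limit (using the positive homogeneity of $\mathfrak M$ for the first expression, and the continuity of $\mathcal P$ on the open positive orthant obtained from concavity). Iterating $n_0$ times yields the identity $\mathcal P(z)=(\lambda^\ast)^{-n_0}\mathcal P(\mathfrak M^{n_0}(z))$ for all $z\in\mathbb R_+^p\setminus\{0\}$, with $n_0$ given by Assumption~(P).

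Next, I would record two facts. First, since $\mathcal P$ is concave and positively homogeneous on $\mathbb R_+^p$ and strictly positive on $\mathbb R_+^p\setminus\{0\}$, it is locally Lipschitz on the open positive orthant $(0,\infty)^p$, hence uniformly continuous on every compact subset of $(0,\infty)^p$. Second, by Assumption~(C), $\mathfrak M$ restricted to each face $\mathcal Z_J$ is uniformly continuous, and each such restriction admits a unique uniformly continuous extension to the compact closure $\overline{\mathcal Z_J}$. Since $\mathfrak M$ is concave (in particular upper semi-continuous) on $\mathbb R_+^p$, these face-wise extensions are mutually compatible on the intersections $\overline{\mathcal Z_J}\cap \overline{\mathcal Z_{J'}}=\overline{\mathcal Z_{J\cap J'}}$, and one obtains that $\mathfrak M$ is continuous on the compact simplex $\mathbb S$, hence uniformly continuous on $\mathbb S$. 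Iterating, $\mathfrak M^{n_0}$ is uniformly continuous on $\mathbb S$, and by Assumption~(P) it maps $\mathbb S$ into the open positive orthant; by compactness, $K:=\mathfrak M^{n_0}(\mathbb S)$ is a compact subset of $(0,\infty)^p$.

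Combining these observations, $\mathcal P|_{\mathcal Z_I}=(\lambda^\ast)^{-n_0}(\mathcal P|_K)\circ (\mathfrak M^{n_0}|_{\mathcal Z_I})$, which is a composition of a uniformly continuous function $\mathfrak M^{n_0}|_{\mathcal Z_I}$ taking values in $K$ and a uniformly continuous function $\mathcal P|_K$, hence uniformly continuous on $\mathcal Z_I$. The main obstacle I anticipate is the gluing step in the second paragraph: one must verify that the uniformly continuous extension of $\mathfrak M|_{\mathcal Z_J}$ to $\overline{\mathcal Z_J}$ coincides with $\mathfrak M$ itself on the lower-dimensional sub-faces of $\overline{\mathcal Z_J}$, so that the face-wise extensions patch into a continuous function on all of $\mathbb S$. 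This compatibility is what Assumption~(C) is really designed to enforce, and the concavity (upper semi-continuity) of $\mathfrak M$ prevents any jump discontinuities at boundary strata; once this is established, the rest of the argument is a routine composition of uniformly continuous maps on compact sets.
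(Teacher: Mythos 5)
Your overall architecture (write $\mathcal P=(\lambda^*)^{-n_0}\,\mathcal P\circ\mathfrak M^{n_0}$, use local Lipschitz continuity of the concave function $\mathcal P$ on a compact subset of the open orthant, and reduce everything to uniform continuity of $\mathfrak M^{n_0}$ on $\mathcal Z_I$) is exactly the paper's, but the central step of your second paragraph has a genuine gap: face-wise uniform continuity (Assumption~(C)) does \emph{not} imply that $\mathfrak M$ is continuous on $\mathbb S$, and concavity does not rescue this. A finite concave function is continuous on the relative interior of its domain but need not be upper semi-continuous at the boundary: on $\mathbb R_+^2$ the function $f(x_1,x_2)=x_1\mathbf 1_{x_2>0}$ is concave, positively homogeneous, uniformly continuous on each face $\mathcal Z_J$, yet discontinuous at $(1,0)\in\mathbb S$, and the uniformly continuous extension of $f|_{\mathcal Z_{\{1,2\}}}$ to the closed simplex takes the value $1$ at $(1,0)$ while $f(1,0)=0$ — so your ``compatibility of face-wise extensions'' fails. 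This is not a pathological example: it is precisely the shape of $\mathfrak M$ produced by the promiscuous mating function $\xi(x,y)=x\min\{1,y\}$, one of the motivating cases, which is why the paper's Assumption~(C) is deliberately weaker than continuity on $\mathbb S$. Consequently your claim that $K:=\mathfrak M^{n_0}(\mathbb S)$ is compact (via continuity on $\mathbb S$) is also unjustified as stated.

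The paper circumvents this by never crossing faces: for a fixed $x\in\mathcal Z_I$ and $J:=\{i:\mathfrak M(x)_i>0\}$, monotonicity and homogeneity of $\mathfrak M$ (every $y\in\mathcal Z_I$ dominates $\delta x$ for some $\delta>0$, and conversely) show that the support of $\mathfrak M(y)$ equals $J$ for \emph{all} $y\in\mathcal Z_I$, and since $|\mathfrak M|$ is bounded above and below away from $0$ on $\mathbb S$, one gets $\mathfrak M(\mathcal Z_I)\subset[\varepsilon_1,\varepsilon_2]\,\mathcal Z_J$. Uniform continuity of $\mathfrak M$ on the single face $\mathcal Z_J$ (Assumption~(C)) plus homogeneity then gives uniform continuity of $\mathfrak M^2$ on $\mathcal Z_I$, and iterating $n_0$ times lands in a set that is relatively compact in $(\mathbb R_+\setminus\{0\})^p$, where $\mathcal P$ is locally Lipschitz. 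If you replace your gluing argument by this image-tracking argument, the rest of your proof goes through.
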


\begin{proof}[Proof of Lemma~\ref{lem:useful2}]
    Since $\mathcal P \circ \mathfrak M^{n_0}=\left(\lambda^*\right)^{n_0} \mathcal P$, it is sufficient to prove that $\mathcal P \circ \mathfrak M^{n_0}$ is uniformly continuous on $\mathcal Z_I$. But $\mathfrak M^{n_0} (\mathcal Z_I)\subset \mathfrak M^{n_0}(  \mathbb S)$ which is relatively compact in $(\mathbb R_+\setminus\{0\})^p$ (since $\mathfrak M$ is primitive and bounded by assumption), and $\mathcal P$ is concave and hence it is locally Lipschitz in $(\mathbb R_+\setminus\{0\})^p$. It is thus sufficient to prove that $\mathfrak M^{n_0}$ is uniformly continuous on $\mathcal Z_I$. 

    Let us first observe that $\mathfrak M$ is locally uniformly continuous on $(\varepsilon+\mathbb R_+\setminus\{0\})\,\mathcal Z_I$ for any $\varepsilon>0$, since $\mathfrak M$ is uniformly continuous on $\mathcal Z_I$,  bounded on $\mathbb S\supset \mathcal Z_I$, and positively homogeneous. In order to conclude, it remains to prove that
    there exists  $J\subset\{1,\ldots,p\}$ and $\varepsilon>0$ such that $\mathfrak M(\mathcal Z_I)\subset (\varepsilon+\mathbb R_+\setminus\{0\})\,\mathcal Z_J$ (the result then follows by induction). Let $x\in \mathcal Z_I$ and let $J=\{i\in\{1,\ldots, p\},\,\mathfrak M(x)_i>0\}$. Then, for all $y\in \mathcal Z_I$, there exists $\delta >0$ such that $\delta x\leq y$, so that, using the fact that $\mathfrak M$ is increasing and homogeneous, $\mathfrak M(y)\geq \delta \mathfrak M(x)$ and hence $J\subset \{i\in\{1,\ldots, p\},\,\mathfrak M(y)_i>0\}$. Reciprocally, $\{i\in\{1,\ldots, p\},\,\mathfrak M(y)_i>0\}\subset J$, so that $\mathfrak M(y)\in \mathbb R_+\mathcal Z_J$. Since on $\mathbb S$, the norm of $\mathfrak M$ is bounded from below away from 0 and from above, we have $\mathfrak M(\mathcal Z_I)\subset [\varepsilon_1,\varepsilon_2]\,\mathcal Z_J$ for some $\varepsilon_1,\varepsilon_2>0$. This implies that $\mathfrak M^2$ is uniformly continuous on $\mathcal Z_I$. We conclude by iteration.
\end{proof}

Let us now proceed with the proof of Proposition~\ref{prop:lyap} in two steps:  first we show that, for all $z\in\mathbb R_+^p\setminus\{0\}$,
    \begin{align}
    \label{eq:step1}
    \limsup_{n\to+\infty} \frac{1}{|n|^a} \mathbb E\left[Q_a(Z_1)\mathbf{1}_{Z_1\neq 0}\mid Z_0=\lfloor nz\rfloor \right]= (\lambda^*)^a Q_a(z);
    \end{align} 
    second, we extend this to a uniform convergence in $z$ by using the regularity of $\mathcal P$.

    \medskip\noindent \textit{Step 1.} As $\mathcal P$ is positively homogeneous,
    we have for any $z\in \mathbb R_+^p\setminus\{0\}$,
    \begin{align}
    \mathbb E\left[Q_a(Z_1)\mathbf{1}_{Z_1\neq 0}\mid Z_0=\lfloor nz\rfloor \right]&=\mathbb E\left[\left(\mathcal P\circ \xi\left(\sum_{i=1}^p \sum_{k=1}^{\lfloor n z_i\rfloor} V_{i,\cdot}^{(k,1)} \right)\right)^a\right]\nonumber\\
    &=|n|^a \mathbb E\left[\left(\mathcal P\left(\frac{1}{n} \xi\left(\sum_{i=1}^p \sum_{k=1}^{\lfloor n z_i\rfloor} V_{i,\cdot}^{(k,1)} \right)\right)\right)^a\right].\nonumber
    \end{align} 
    According to Lemma~30 in~\cite{FritschVillemonaisEtAl2022}, we have
    \begin{align*}
    \frac{1}{n} \xi\left(\sum_{i=1}^p \sum_{k=1}^{\lfloor n z_i\rfloor} V_{i,\cdot}^{(k,1)} \right)\xrightarrow[n\to+\infty]{a.s} \mathfrak M(z).
    \end{align*}
    Let $I\subset\{1,\ldots,p\}$ be such that $\mathfrak M(z)\in (\mathbb R_+\setminus\{0\}) \mathcal Z_I$, then the above convergence entails that, almost surely,  for $n$ large enough, $\frac{1}{n} \xi\left(\sum_{i=1}^p \sum_{k=1}^{\lfloor n z_i\rfloor} V_{i,\cdot}^{(k,1)} \right)\in (\mathbb R_+\setminus\{0\})\,\mathcal Z_J$ for some $J\supset I$. Using Corollary~7 in~\cite{FritschVillemonaisEtAl2022}, we have, for all $n\geq 1$,
    \begin{align*}
        \mathfrak M(z)\geq \frac1n
        \mathbb E\left[Z_1\mid Z_0=\lfloor nz\rfloor \right]
        =\frac1n\mathbb 
        E\left[\xi\left(\sum_{i=1}^p \sum_{k=1}^{\lfloor n z_i\rfloor} V_{i,\cdot}^{(k,1)} \right)\right],
    \end{align*}
    so that we also have $J\subset I$ almost surely.
    
%
    
    Since $\mathcal P$ restricted to $(\mathbb R_+\setminus\{0\})\,\mathcal Z_I$ is continuous, and since $\mathfrak M\circ \mathcal P=\lambda^* \mathcal P$ by~\eqref{eq:defP}, we deduce that
    \begin{align*}
    \mathcal P\left(\frac{1}{n} \xi\left(\sum_{i=1}^p \sum_{k=1}^{\lfloor n z_i\rfloor} V_{i,\cdot}^{(k,1)} \right)\right)\xrightarrow[n\to+\infty]{a.s} \mathcal P\left(\mathfrak M(z)\right)=\lambda^*\mathcal P(z).
    \end{align*}
    In addition, since $\xi$ is sub-affine, since  $\frac1n\sum_{i=1}^p \sum_{k=1}^{\lfloor n z_i\rfloor} V_{i,\cdot}^{(k,1)}$ is bounded in $L^\momentexpo$ by Assumption~(M) and $\mathcal P$ being positively homogeneous, we deduce that $\mathcal P\left(\frac{1}{n} \xi\left(\sum_{i=1}^p \sum_{k=1}^{\lfloor n z_i\rfloor} V_{i,\cdot}^{(k,1)}\right)\right)$ is also bounded in $L^\momentexpo$ (where $\momentexpo>a$), and we deduce from the dominated convergence theorem that 
    \begin{align*}
    \mathbb E\left[\left(\mathcal P\left(\frac{1}{n} \xi\left(\sum_{i=1}^p \sum_{k=1}^{\lfloor n z_i\rfloor} V_{i,\cdot}^{(k,1)} \right)\right)\right)^a\right]\xrightarrow[n\to+\infty]{} (\lambda^*)^a\mathcal P(z)^a.
    \end{align*}
    We deduce that~\eqref{eq:step1} holds true. This concludes the first step of the proof.
    
\medskip\noindent\textit{Step 2.}   Fix a non-empty $I\subset \{1,\ldots,p\}$ and let $\varepsilon_1>0$ be a small number that will be fixed later. Let $z^1,\ldots,z^{m_1} \in (1+\varepsilon_1)\mathcal Z_I$ and $m_1=m(\varepsilon_1)\geq 1$ be such that, for all $z\in\mathcal Z_I$, there exists $k=k(z,\varepsilon_1)\in\{1,\ldots,m_1\}$ such that $z\leq z^{k}$ and $|z-z^{k}|\leq 2\varepsilon_1$. The existence of this finite family $z^1,\ldots,z^{m_1}$ easily follows from a compactness argument. Let $n_1=n_1(\varepsilon_1)$ be large enough so that, for all $n\geq n_1$, for all $k\in\{1,\ldots,m_1\}$,
    \begin{align*}
    \frac{1}{|n|^a} \mathbb E\left[Q_a(Z_1)\mathbf{1}_{Z_1\neq 0}\mid Z_0=\lfloor nz^k\rfloor\right]\leq (1+\varepsilon_1) (\lambda^*)^a \mathcal P(z^k)^a.
    \end{align*} 
    Then, since $Q_a$ is non-decreasing and since $Z_1$ (with $Z_0=\lfloor nz\rfloor$) is stochastically non-decreasing with respect to $z$, we deduce that, for all $n\geq n_1$ and all $z\in \mathcal Z_I$,
    \begin{align}
    \frac{1}{|n|^a} \mathbb E\left[Q_a(Z_1)\mathbf{1}_{Z_1\neq 0}\mid Z_0=\lfloor nz\rfloor\right]&\leq (1+\varepsilon_1) (\lambda^*)^a \mathcal P(z^{k(z,\varepsilon_1)})^a\nonumber\\
    &\leq  (1+\varepsilon_1)  (\lambda^*)^a \mathcal P(z)^a\,\left(\sup_{x,y\in\mathcal Z_I, |(1+\varepsilon_1)x-y|\leq 2\varepsilon_1}\frac{\mathcal P((1+\varepsilon_1)x)}{\mathcal P(y)}\right)^a.\label{eq:ineqfirst}
    \end{align}
    Since $\mathcal P$ is uniformly continuous on $\mathcal Z_I$ and lower bounded away from $0$ on $\mathcal Z_I$, we deduce that there exists $\varepsilon_1$ small enough so that
    \[
    \theta_a:= (1+\varepsilon_1)^{a+1} (\lambda^*)^a \,\left(\sup_{x,y\in\mathcal Z_I, |(1+\varepsilon_1)x-y|\leq 2\varepsilon_1}\frac{\mathcal P(x)}{\mathcal P(y)}\right)^a < \theta_0.
    \]
    This and~\eqref{eq:ineqfirst} implies that
    \begin{align*}
    \frac{1}{|n|^a} \mathbb E\left[Q_a(Z_1)\mathbf{1}_{Z_1\neq 0}\mid Z_0=\lfloor nz\rfloor\right]&\leq \theta_a \mathcal P(z)^a.
    \end{align*}
    Since any  $z\in \mathbb{N}^p\setminus\{0\} \cap \left(\mathbb{R}_+\setminus\{0\}\right)\mathcal Z_I$ can be written as $n \frac{z}{|z|}$ with $n=|z|$ and $\frac{z}{|z|}\in\mathcal Z_I$, we deduce that, for any such $z$ with $|z|\geq n_1$, 
    \begin{align*}
    \frac{1}{|z|^a} \mathbb E\left[Q_a(Z_1)\mathbf{1}_{Z_1\neq 0}\mid Z_0=z\right]\leq \theta_a \mathcal P(z/|z|)^a.
    \end{align*} 
    Finally, the homogeneity of $\mathcal P$ allows us to conclude that Proposition~\ref{prop:lyap} holds true for all $z\in \mathbb{N}^p\setminus\{0\} \cap \left(\mathbb{R}_+\setminus\{0\}\right)\mathcal Z_I$. Since there are only finitely many subsets $I$ of $\{1,\ldots,p\}$, Proposition~\ref{prop:lyap} holds true for all $z\in\mathbb N^p\setminus \{0\}$.

\section{Proof of Theorem~\ref{thm:finitely-many-QSDs}}
\label{sec:proof-of-thm:finitely-many-QSDs}

The proof of the first statement relies on Theorem~4.1~\cite{ChampagnatVillemonais2022} and in part on some arguments of the proof of  Theorem~5.1 therein. The main additional difficulty is that our reference exponential parameter $\theta_0$ is \textit{a priori} larger than the one considered in this reference. 
We overcome this difficulty in Lemma~\ref{lem:useful} below. Another difficulty is that a key part of the argument relies on the assumption~(E1-4) in~\cite{ChampagnatVillemonais2017}, which does not apply directly when the Lyapunov parameter~$\theta_a$ (in Proposition~\ref{prop:lyap}) is only assumed strictly smaller than $\theta_0$. 
To overcome this difficulty,  we prove in Proposition~\ref{prop:E-with-theta0} that this last condition actually entails the assumption~(E1-4) in~\cite{ChampagnatVillemonais2017}.


 Fix $a\in(1,\momentexpo)$ such that $(\lambda^*)^a<\theta_0$ and set $H=Q_a/\inf Q_a$. According to Proposition~\ref{prop:lyap} and since $H(z)$ tends to $\infty$ when $|z|\to+\infty$, there exists $\theta_a<\theta_0$ such that
\begin{align}
\label{eq:thetaa}
\limsup_{|z|\to+\infty} \frac{\mathbb E_z(H(Z_1)\mathbf 1_{Z_1\neq 0})}{H(z)}\leq \theta_a< \gamma:=(\theta_a+\theta_0)/2 <\theta_0.
\end{align}
In particular, there exist only finitely many communication classes $E_1,\ldots,E_{k_0}$, $k_0\geq 1$, for the process $(Z_n)_{n\in\mathbb N}$ such that 
\begin{align*}
\forall i\in\{1,\ldots,k_0\},\,\exists z\in E_i,\ \text{such that}\ \frac{\mathbb E_z(H(Z_1)\mathbf 1_{Z_1\neq 0})}{H(z)}> \gamma.
\end{align*}
Note that $k_0\neq 0$ (otherwise $\theta_0\leq \gamma$).
%
%
For all $i\in\{1,\ldots,k_0\}$, we define $H_i=H\1_{E_i}$ and denote by $Y^{(i)}$ the process with state space $E_i\cup\{0\}$ defined by
\begin{align*}
Y_n^{(i)}=\begin{cases}
Z_n&\text{if }Z_n\in E_i\\
0  &\text{otherwise}.
\end{cases}
\end{align*}
Note that $0$ is an absorbing state for the process $Y^{(i)}$ as $E_i$ is a communication class. 
For all $i\in\{1,\ldots,k_0\}$, we denote by $\theta_{0,i}$ the absorption parameter of $Y^{(i)}$:
\begin{align*}
\theta_{0,i}=\sup_{z\in E_i}\sup\{\theta>0,\ \liminf_{n\to+\infty} \theta^{-n}\mathbb P_z(Y^{(i)}_n\neq 0)>0\}
\end{align*}
and we set
\begin{align*}
\bar\theta=\max_{i\in \{1,\ldots,k_0\}}\theta_{0,i}.
\end{align*}

We define the set 
\[
E'_0:=\mathbb N^p\setminus \left(\{0\}\cup \bigcup_{i=1}^{k_0} E_i\right).
\]
By definition of $E_1,\ldots,E_{k_0}$, we have
\begin{align}
    \label{eq:inEk}
    \forall z\in E'_0,\ \frac{\mathbb E_z(H(Z_1)\mathbf 1_{Z_1\neq 0})}{H(z)}\leq  \gamma.
\end{align}


In what follows, we make use of the following crucial lemma.

\begin{lemma}
    \label{lem:useful}
    We have $\bar\theta=\theta_0$.
\end{lemma}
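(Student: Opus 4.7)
The easy half is $\bar\theta\leq\theta_0$: for any $i$ and any $y\in E_i$ the inclusion $\{Y^{(i)}_m\neq 0\}\subset\{Z_m\neq 0\}$ gives $\theta_{0,i}\leq\theta_0$, hence $\bar\theta\leq\theta_0$ by taking the maximum over $i$.

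For the reverse inequality $\theta_0\leq\bar\theta$, the plan is to argue by contradiction. Assuming $\bar\theta<\theta_0$ and using $\gamma<\theta_0$ from~\eqref{eq:thetaa}, I would fix $\theta\in(\max(\gamma,\bar\theta),\theta_0)$ and aim to prove an upper bound of the form $\mathbb P_z(Z_n\neq 0)\leq K(z)\,n^{k_0}\theta^n$ for every $z\in\mathbb N^p\setminus\{0\}$, which contradicts the fact that there must exist $z$ with $\liminf_n\theta^{-n}\mathbb P_z(Z_n\neq 0)>0$ (by definition of $\theta_0$).

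The proof would rest on three ingredients. First, iterating~\eqref{eq:inEk} yields, for $z\in E'_0$, the bound $\mathbb P_z(\tau_0>n,\,Z_n\neq 0)\leq\gamma^n H(z)$, where $\tau_0:=\inf\{k\geq 0: Z_k\in\bigcup_i E_i\}$, using that $H\geq 1$. Second, because $E_i$ is a communication class of $Z$ and the chain cannot return to a class it has left, any $Z$-path between two states of $E_i$ must stay inside $E_i$; hence $Y^{(i)}$ is irreducible on $E_i$, and Vere-Jones' Perron--Frobenius theory for irreducible non-negative kernels ensures that $\lim_m\mathbb P_y(Y^{(i)}_m\neq 0)^{1/m}$ exists, is independent of $y\in E_i$, and equals $\theta_{0,i}$. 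Choosing $\varepsilon>0$ with $\bar\theta+\varepsilon<\theta$, this gives $\mathbb P_y(Y^{(i)}_m\neq 0)\leq C_{y,\varepsilon}(\bar\theta+\varepsilon)^m$ for all $m$ and all $y\in E_i$. Third, the same non-return property implies that any trajectory of $Z$ visits at most $k_0$ distinct classes among $E_1,\ldots,E_{k_0}$, in strictly decreasing DAG order.

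Combining these, I would iterate the strong Markov property at the successive entry and exit times of the $E_i$'s, producing a recursion of depth at most $k_0$. Each level contributes a convolution of geometric sequences with ratios $\gamma/\theta$ and $(\bar\theta+\varepsilon)/\theta$, both strictly less than $1$, and the polynomial factor $n^{k_0}$ arises from summing over the possible splits of $[0,n]$ into time segments. The main obstacle I foresee is controlling the constants $C_{y,\varepsilon}$ in the Perron--Frobenius bound as $y$ ranges over a possibly infinite $E_i$. My plan is to dominate $C_{y,\varepsilon}$ by a multiple of $H(y)$ via a left-eigenfunction argument exploiting the global Lyapunov bound $\mathbb E_z(H(Z_1))\leq\theta_a H(z)+C$ from Proposition~\ref{prop:lyap}, so that the weighted sums $\sum_{y\in E_i}\mathbb P_z(\tau_0=k,Z_k=y)\,C_{y,\varepsilon}$ remain dominated by $\mathbb E_z(H(Z_k))$ and the recursion closes.
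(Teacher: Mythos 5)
Your outer architecture (easy inequality $\bar\theta\le\theta_0$, DAG order on the classes, $\gamma$-decay on $E'_0$ via iterating~\eqref{eq:inEk}, then an induction along the partial order with geometric convolutions and a polynomial factor absorbed by slightly enlarging the rate) is essentially the paper's proof. The genuine gap is your second ingredient, the within-class estimate. You need, for every entry state $y$ of a (possibly infinite) class $E_i$, an \emph{all-$m$} bound $\mathbb P_y(Y^{(i)}_m\neq 0)\le C_y(\bar\theta+\varepsilon)^m$, and you justify it by Vere-Jones $R$-theory. But that theory controls the pointwise quantities $P^n(x,y)$ for the kernel restricted to $E_i$; the survival probability $\mathbb P_y(Y^{(i)}_m\neq 0)$ is a row sum over the possibly infinite class, its decay rate can be strictly slower than $1/R$, and $\theta_{0,i}$ is defined through a $\liminf$ of survival probabilities. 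So neither the existence of $\lim_m \mathbb P_y(Y^{(i)}_m\neq 0)^{1/m}$, nor its identification with $\theta_{0,i}$, nor (most importantly) the upgrade from ``$\liminf_m \rho^{-m}\mathbb P_y(Y^{(i)}_m\neq 0)=0$ for $\rho>\theta_{0,i}$'' to a geometric upper bound valid for \emph{all} $m$ follows from the cited theory: that upgrade is precisely the content to be proved (a monotone sequence can have $\liminf \rho^{-m}a_m=0$ while admitting no bound $a_m\le C\rho^m$). Your fallback for infinite $E_i$ --- dominating $C_{y,\varepsilon}$ by a multiple of $H(y)$ ``via a left-eigenfunction argument'' --- is not an argument: no such eigenfunction is available at this stage (producing eigen-objects at the critical parameter is what the later sections accomplish), and the one-step Lyapunov bound of Proposition~\ref{prop:lyap} does not by itself yield it.

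What makes the paper's proof close is a device you never use: the \emph{finite} set $K_i=\{z\in E_i:\ \mathbb E_z(H(Z_1)\mathbf 1_{Z_1\neq 0})>\gamma H(z)\}$, finite by~\eqref{eq:thetaa}. Off $K_i$ the drift gives the $\gamma^n$ bound~\eqref{eq:psi1}, so survival inside $E_i$ is channelled through returns to the finite set $K_i$; the definition of $\bar\theta$ is only invoked at the finitely many states of $K_i$ (see~\eqref{eq:C-rho}), and the strong Markov property at $\tau_{K_i}$ plus one more iteration of the Lyapunov inequality upgrades this to $\mathbb E_z(H(Z_n)\mathbf 1_{Z_n\in E_i})\le C\rho^n H(z)$ for all $z\in E_i$, which is exactly the weighted, entry-state-uniform bound your recursion needs. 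Replacing your spectral-theoretic ingredient by this $K_i$-based argument (and, a minor point, running the final contradiction with two rates $\theta<\theta'<\theta_0$, or for all $\rho>\bar\theta\vee\gamma$ as the paper does, so that the $n^{k_0}$ factor cannot spoil it) would turn your sketch into the paper's proof.
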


\begin{proof}[Proof of Lemma~\ref{lem:useful}]
    Our aim is to show that, for all $\rho>\bar\theta\vee\gamma$ and all $z\in \mathbb N^p$, we have
\begin{align}
    \label{eq:conv2}
    \liminf_{n\to+\infty} \rho^{-n}\mathbb P_z(Z_n\neq 0)=0.
\end{align}
This shows that $\bar\theta\vee\gamma\geq \theta_0$ and hence, as by definition $\gamma<\theta_0$, that $\bar\theta\geq \theta_0$. The converse inequality is trivial, concluding the proof of Lemma~\ref{lem:useful}.

    For all $i\neq j\in \{1,\ldots,k_0\}$, we have
\begin{align*}
\forall z\in E_j,\ \mathbb P_z(\exists n\geq 0, Z_n\in E_i)>0\ \text{ or }\forall z\in E_j,\ \mathbb P_z(\exists n\geq 0, Z_n\in E_i)=0.
\end{align*}
If the condition on the left is satisfied, we write $E_i\prec E_j$, otherwise we write $E_i\not\prec E_j$. This defines a partial order on $\{1,\ldots,k_0\}$.
    For all $i\in\{1,\ldots,k_0\}$, we denote by $c(i)$ the maximal length of sets of distinct indices $i_1=i,i_2,\ldots,i_{c(i)}$ such that $E_{i_{c(i)}}\prec \cdots \prec E_{i_2}\prec E_{i_1}$. We also define, for all $\ell\geq 1$, the set
    \begin{align*}
        E_0^{\ell}:=\{x\in E'_0,\ \exists i\in\{1,\ldots,k_0\}\text{ with }c(i)=\ell\text{ and }x\to E_i,\text{ and }x\not\to E_j\ \forall j\text{ such that }c(j)>\ell \},
    \end{align*}
    where $x\to E_i$ means that 
    $\mathbb P_x(\exists n\geq 0,\, Z_n \in E_i)>0$, while $x\not\to E_j$ means that 
    $\mathbb P_x(\exists n\geq 0,\, Z_n \in E_i)=0$. We also set
    \begin{align*}
        E_0^{0}:=E'_0\setminus \cup_{\ell=1}^{\max_i c(i)} E_0^\ell.
    \end{align*}

    We obtain~\eqref{eq:conv2} by proving that, for any fixed $\rho>\bar\theta\vee\gamma$, for all $z\in \mathbb N^p\setminus \{0\}$,
    \begin{align}
    \mathbb E_z(H(Z_n)1_{Z_n\neq 0})&\leq   C_{\rho} \rho^n H(z)
    \label{eq:lemma-step1}
    \end{align}
    where $C_\rho$ does not depend on $z$.

    Remarking that, by definition, $E_0^0\cup\{0\}$ is an absorbing set, then \eqref{eq:lemma-step1} is immediate for all $z\in E_0^0$ by~\eqref{eq:inEk}.
    In the following, we first obtain in Step~1 two useful inequalities (see~\eqref{eq:ineqinE0l} and~\eqref{eq:ineqinCi} below). Then, we prove~\eqref{eq:lemma-step1} for all $z\in E_i \cup E_0^{c(i)}$,  $i\in\{1,\ldots,k_0\}$, by induction on $c(i)$ in Step~2.

    \medskip\textit{Step~1.}
 We first observe that, for all $\ell\geq 1$ and all $z\in E_0^{\ell}$,  using~\eqref{eq:inEk},
    \begin{align}
        \label{eq:ineqinE0l}
        \mathbb E_z\left(H(Z_n)\1_{Z_n\in E_0^\ell} \right)\leq \gamma^n H(z)\leq \rho^n H(z).
    \end{align}

    We now prove that, for all $i\in \{1,\ldots,k_0\}$ and all $z\in E_i$,
    \begin{align}
        \label{eq:ineqinCi}
    \mathbb E_z(H(Z_n) 1_{Z_n\in E_i})\leq C\rho^n H(z).    
    \end{align}   
In order to do so, we consider the set
\begin{align*}
    K_i=\left\{z\in  E_i,\  \frac{\mathbb E_z(H(Z_1)\mathbf 1_{Z_1\neq 0})}{H(z)}>\gamma\right\}.
\end{align*}
    For all  $z\in E_i$, since $\rho> \bar\theta$ and by definition of $\bar\theta$,
\begin{align}
    \label{eq:conv1}
    \liminf_{n\to+\infty} \rho^{-n}\mathbb P_z(Y_n^{(i)}\neq 0)=0.
\end{align}
Since $K_i$ is finite according to~\eqref{eq:thetaa}, we deduce that there exists a constant $C>0$ such that
\begin{align}
    \label{eq:C-rho}
    \mathbb P_z(Z_n\in E_i)&= \mathbb P_z(Y_n^{(i)}\neq 0)\leq  C\rho^n,\ \forall z\in K_i.
\end{align}
In what follows, the constant $C$ may change from line to line.
Moreover, denoting $\tau_{K_i}=\min\{\ell\geq 0,\ Z_\ell\in K_i\}$, we deduce from the definition of $H$ and of $K_i$ that
\begin{align}
    \label{eq:psi1}
    \mathbb P_z(Z_n\in E_i\text{ and }n< \tau_{K_i})\leq \mathbb E_z(H(Z_n)1_{Z_n\in E_i\text{ and }n< \tau_{K_i}})\leq \gamma^n H(z),\ \forall z\in E_i.
\end{align}
Hence, for all $z\in E_i$ and all $n\geq 0$, we have, using the strong Markov property at time $\tau_{K_i}$, 
\begin{align}
\nonumber
    \mathbb P_z(Z_n\in E_i)&\leq  \mathbb P_z(Z_n\in E_i\text{ and }n< \tau_{K_i})+\sum_{\ell=0}^n \mathbb P_z(\tau_{K_i}=\ell) \sup_{y\in K_i} \mathbb P_y(Z_{n-\ell}\in E_i)\\
\nonumber
    &\leq \gamma^n H(z)+\sum_{\ell=0}^n \mathbb P_z(\tau_{K_i}=\ell) C\rho^{n-\ell}\\
\nonumber
    &\leq \gamma^n H(z)+\sum_{\ell=0}^n \mathbb E_z(1_{\tau_{K_i}>\ell-1}H(Z_{\ell-1})) C\rho^{n-\ell}\\
\nonumber
    &\leq \gamma^n H(z)+ \sum_{\ell=0}^n \gamma^{\ell-1} H(z) C\rho^{n-\ell}\\
\label{eq.inter.prop.Ei}
    &\leq C \rho^n H(z),
\end{align}
where $C$ may depend on $\rho$ and $\gamma$.
Finally, from~\eqref{ineq:QSD}, we obtain for all $z\in E_i$
\[
    \mathbb E_z(H(Z_1)1_{Z_1\in E_i}) \leq \theta_a H(z) + \frac{C_a}{\inf Q_a}.
\]
Then, iterating this property and using~\eqref{eq.inter.prop.Ei}, we deduce that
\begin{align*}
    \mathbb E_z(H(Z_n)1_{Z_n\in E_i})&\leq \theta_a^n H(z)+\sum_{\ell=0}^{n-1} \frac{C_a}{\inf Q_a} \theta_a^{n-\ell-1} \mathbb P_z(Z_\ell\in E_i)\\
    &\leq \gamma^n H(z)+\sum_{\ell=0}^{n-1} \frac{C_a}{\inf Q_a} \gamma^{n-\ell-1}  C \rho^\ell H(z)\\
    &\leq C \rho^n H(z),
\end{align*}
where $C$ may depend on $\rho$ and $\gamma$.
This proves~\eqref{eq:ineqinCi} for all $z\in E_i$.

    \medskip\noindent\textit{Step 2.} Assume that there exists $c\geq 0$ such that~\eqref{eq:lemma-step1} holds true for all $z\in E_0^0,\ldots,E_0^c$ and all $z\in E_i$ with $c(i)\leq c$ (this is true for $c=0$ as we already saw that~\eqref{eq:lemma-step1} holds true for $z\in E_0^0$).
    If $c=\max_{i\in \{1,\ldots,k_0\}} c(i)$, then there is nothing to prove. Otherwise, let $i\in \{1,\ldots,k_0\}$ such that $c(i)=c+1$. 
    
    Denote by $T_i$ the first exit time from $E_i$ by $(Z_n)_{n\in\mathbb N}$
    and let us use a similar computation as in the end of Step~1. For all $z\in E_i$ and all $n\geq 0$, we have
    \begin{align*}
    \mathbb E_z(H(Z_n) 1_{Z_n\neq 0})&= \mathbb E_z(H(Z_n) 1_{Z_n\in E_i}) +\sum_{\ell=1}^n \mathbb E_z\left(1_{T_i=\ell} 1_{Z_\ell \neq 0}\mathbb E_{Z_\ell}\left(H(Z_{n-\ell})1_{Z_{n-\ell}\neq 0}\right)\right)\\
    &\leq C\rho^n H(z)+\sum_{\ell=1}^n \mathbb E_z\left(1_{T_i=\ell}1_{Z_\ell \neq 0} C_\rho \rho^{n-\ell} H(Z_{\ell})\right)
    \end{align*}
    where we used~\eqref{eq:ineqinCi}, the Markov property, the fact that, conditioning on $Z_0\in E_i$,
    \begin{align*}
    Z_{T_i}\in \{0\}\cup \bigcup_{\ell\leq c} E_0^\ell \cup \bigcup_{c(j)\leq c} E_j\text{ almost surely },
    \end{align*}
    and the induction assumption. Then, there exists $C_a>0$ such that
    \begin{align*}
     \mathbb E_z(H(Z_n) 1_{Z_n\neq 0})
    &\leq C\rho^n H(z)+\sum_{\ell=1}^n \mathbb E_z\left(1_{T_i>\ell-1} C_\rho \rho^{n-\ell} (\theta_a H(Z_{\ell-1})+C_a)\right)\\
    &\leq C\rho^n H(z)+\sum_{\ell=1}^n \mathbb E_z\left(1_{T_i>\ell-1} C_\rho \rho^{n-\ell} (\theta_a+C_a) H(Z_{\ell-1})\right)
    \end{align*}
    where we used the Markov property at time $\ell-1$ and Proposition~\ref{prop:lyap}. We deduce from~\eqref{eq:ineqinCi} that
    \begin{align*}
    \mathbb P_z(Z_n\neq 0)
    &\leq C\rho^n H(z)+C_\rho  (\theta_a+C_a)  C \sum_{\ell=1}^{n}  \rho^{n-1} H(z)\\
    &\leq \tilde C(n+1)\rho^n H(z)
    \end{align*}
    for some $\tilde C>0$, which depends on $\rho$ but not on $z$ neither $n$. Since this is true for any $\rho>\bar\theta\vee \gamma$, this proves~\eqref{eq:lemma-step1} for all $z\in E_i$ with $c(i)=c+1$.

    By the same arguments, using~\eqref{eq:ineqinE0l} instead of~\eqref{eq:ineqinCi} and the fact that, starting from $E_0^{c+1}$, at the exit time $\tilde T_i$ of $E_0^{c+1}$, we have 
    \begin{align*}
    Z_{\tilde T_i}\in \{0\}\cup \bigcup_{\ell\leq c} E_0^\ell \cup \bigcup_{c(j)\leq c+1} E_j\text{ almost surely },
    \end{align*}
    one shows that this also holds true for all $z\in E_0^{c+1}$.
    
    This concludes the proof of Lemma~\ref{lem:useful}.
\end{proof}

From now on, we assume without loss of generality that the communication classes $E_1,\ldots,E_{k_0}$ are indexed in the following way : there exists $k\in\{1,\ldots,k_0\}$ such that, for all $i\in\{k+1,\ldots,k_0\}$, $\theta_{0,i}<\bar\theta$, and, for all $i\in\{1,\ldots,k\}$, $\theta_{0,i}=\bar\theta=\theta_0$. For any  $i\in\{k+1,\ldots,k_0\}$, we have, fixing any $\rho\in(\max_{j\in\{k+1,\ldots,k_0\}}\theta_{0,j},\bar\theta)$ and using the same method as in Step~1 from the proof of Lemma~\ref{lem:useful} (see~\eqref{eq:ineqinCi}),
\begin{align*} 
\mathbb E_z(H(Z_n)1_{Z_n \in E_i})\leq C_\rho\rho^n H(z),\ \forall z\in E_i,
\end{align*}
where $C_\rho$ is a positive constant which may depend on $\rho$. Setting
\[
E_0:=E'_0\cup\bigcup_{i=k+1}^{k_0} E_i,
\]
one easily checks, using the last inequality and~\eqref{eq:inEk}, that, for any $r\in (\gamma\vee \rho,\bar\theta)$, there exists $C_r>0$ such that
\begin{align*} 
    \mathbb E_z(H(Z_n)1_{n<T_{E_0}})\leq C_r r^n H(z),\ \forall z\in E_0
\end{align*}
with $T_{E_0}$ the first exit time of $E_0$.

Note also that, from Proposition~\ref{prop:lyap} since $\inf H=1$,
 there exists $C_W>0$ (possibly larger than $\theta_0$) such that, for all $z\in\mathbb{N}^p\setminus\{0\}$, $\mathbb{E}_z(H(Z_1)1_{Z_1\neq 0})\leq C_w H(z)$.

 In order to apply Theorem~4.1 in~\cite{ChampagnatVillemonais2022}, which implies the first part of Theorem~\ref{thm:finitely-many-QSDs}, it then remains to prove that, for all $i\in \{1,\ldots,k\}$ and all $z\in E_i$, we have
\begin{align}
\label{eq:expo-conv-Ei}
\left|\theta_0^{-n}\mathbb E_z\left(f(Z_n)\mathbf{1}_{Z_n\in E_i}\right)-h_i(z)\mu_i(f)\right|\leq C\alpha^n H_i(z)\,\|f\|_{H_i},\quad\forall n\geq 0,\ \forall f\in L^\infty(H_i),
\end{align}
for some $C>0,\alpha\in(0,1)$, some non-negative non-zero function $h_i$ and some probability measure $\mu_i$ on $E_i$. In order to do so, its is sufficient to check Assumptions~(E1-4) in Section~2 of~\cite{ChampagnatVillemonais2017}. These conditions are recalled in the Appendix and we will more precisely check Assumptions~(E1), (E2'), (E3) and (E4), which is sufficient to obtain (E1-4) according to Proposition~\ref{prop:E-with-theta0}.  Fixing $i\in \{1,\ldots,k\}$, we define the set
\begin{align*}
K_i:=\left\{z\in  E_i,\  \frac{\mathbb E_z(H_i(Z_1)\mathbf 1_{Z_1\neq 0})}{H_i(z)}>\gamma\right\},
\end{align*}
which is finite according to~\eqref{eq:thetaa}.

Since $E_i$ is a communication class and the process is assumed to be aperiodic, there exists $n_1^i\geq 0$ such that 
\[c_1^i:=\inf_{x,y\in K_i}\mathbb P_x\left(Z_{n_1^i}=y\right)>0.\]
Note that $K_i\neq \emptyset$, by definition of $E_i$. Fix $z_i\in K_i$. Then, for all $z\in K_i$,
\[\mathbb P_z\left(Z_{n_1^i}\in \cdot \right)\geq \mathbb P_z\left(Z_{n_1^i}=z_i\right)\delta_{z_i}~(\cdot )\geq c_1^i \nu_i(\cdot \cap K),\]
for $\nu_i = \delta_{z_i}$. This entails (E1). In addition, taking $\varphi_1=H_i$ and $\theta_1=\gamma<\theta_0=\theta_{0,i}$, the condition (E'2) is an immediate consequence of the definition of $K_i$ and the fact that $z\mapsto \mathbb E_z(H_i(Z_1)\mathbf 1_{Z_1\in E_i})$ is bounded on $K_i$ by Proposition~\ref{prop:lyap}. 

For all $x,y\in K_i$ and $n\geq 0$, we have, using the Markov property at time $n_1^i$ and the fact that $\mathbb P_x(Z_{n}\in E_i)$ is non-increasing in $n$ (as $\{0\}$ is absorbing for the process $Y^{(i)}$),
\begin{align*}
\mathbb P_x(Z_n\in E_i)\geq \mathbb P_x(Z_{n\vee n_1^i}\in E_i)\geq \P_x(Z_{n_1^i}=y)\,\mathbb P_y(Z_{n\vee n_1^i-n_1^i}\in E_i)\geq c_1^i\,\mathbb P_y(Z_{n}\in E_i).
\end{align*}
This proves (E3).

Finally (E4) holds true since we assumed that the process $Z$ is aperiodic and $E_i$ is a communication class.
This concludes the proof of~\eqref{eq:eta}. In addition, any quasi-stationary distribution $\nu\in\mathcal M(Q_a)$ with absorption parameter $\theta_0$ satisfies $\nu(\{x,\ j(x)=0\})=1$ (see Proposition~2.3 in~\cite{ChampagnatVillemonais2022}). Integrating~\eqref{eq:eta} with respect to $\nu$ implies that $\nu$ is a convex combination of $\nu_1,\ldots, \nu_\ell$. This concludes
the first part of Theorem~\ref{thm:finitely-many-QSDs}.

\medskip
To conclude the proof of Theorem~\ref{thm:finitely-many-QSDs},  it remains to prove that the set $E=\{z\in \mathbb N^p\setminus\{0\},\,\sum_{i\in I}\eta_i(z)=0\}$ is finite and that any quasi-stationary distribution with absorption parameter strictly smaller than $\theta_0$ is supported by $E$.

We first observe that Theorem~4.1 in~\cite{ChampagnatVillemonais2022} also implies that $j\equiv 0$ on the set $E$. Assume on the contrary that $E$ is not finite and fix $z_0\in\mathbb N^p\setminus \{0\}$ such that $\sum_{i=1}^\ell\eta_i(z_0)>0$ and $j(z_0)=0
$ (existence of such a point $z_0$ is guaranteed by Proposition~2.3 in~\cite{ChampagnatVillemonais2022}). We observe that
\begin{align*}
\liminf_{n\to+\infty} \theta_0^{-n}\mathbb P_{z_0}(Z_n\neq 0)>0.
\end{align*}

Then, by primitivity of $\mathfrak M$, there exists $n_0$ such that for all $x\in \mathbb N^p\setminus\{0\}$ we have $\mathfrak M^{n_0}(x)>0$. This implies that for $k>0$ large enough we have $\mathbb P_{kx} (Z_{n_0} \geq z_0)>0$. Indeed, if it is not the case, we can conclude using Theorem~6 in \cite{FritschVillemonaisEtAl2022} that 
\[\mathfrak M^{n_0}(x)=\lim_{k\to +\infty} \dfrac{\mathbb E(Z_{n_0} \mid Z_0=kx)}{k}\leq \lim_{k\to +\infty}\dfrac{z_0}{k}=0,\]
which is a contradiction. Then,  we have for any $z$ large enough $\mathbb P_z(Z_{n_0}\geq z_0) >0$.
 Hence, using the fact that $E$ is not finite and hence contains arbitrarily large points, there exists $z\in E$ and $z'_0\geq z_0$ such that $\mathbb P_z(Z_{n_0}= z'_0)>0$. By super-additivity of $\xi$, we have $\mathbb P_{z'_0}(Z_n\neq 0)\geq \mathbb P_{z_0}(Z_n\neq 0)$ and hence
 \begin{align*}
 \liminf_{n\to+\infty} \theta_0^{-n}\mathbb P_{z}(Z_{n_0+n}\neq 0)>0.
 \end{align*}
 As we noticed that $j\equiv 0$ on the set $E$, this is not compatible with the definition of $E$ and~\eqref{eq:eta}. We have thus proved by contradiction that $E$ is finite.

Let $\nu_{QS}$ be a quasi-stationary distribution for $Z$ in $\mathcal M(Q_a)$ with absorption parameter $\theta_{QS}<\theta_0$. Then~\eqref{eq:eta}  implies that $\sum_{i=1}^\ell \nu_{QS}(\eta_i)=0$ and hence the support of $\nu_{QS}$ is included in $E$. 
This concludes the proof of the penultimate assertion of Theorem~\ref{thm:finitely-many-QSDs}.

 To conclude, it remains to prove that there are no quasi-stationary distributions for $Z$ in $\mathcal
M(Q_a)$ with absorption parameter strictly larger than $\theta_0$. This is a direct consequence of~\eqref{eq:eta}. Indeed, since $j$ is bounded, integrating this inequality with respect to a quasi-stationary distribution $\nu_{QS}$ with absorption parameter $\theta_{QS}$, and taking $f\equiv 1$, shows that 
\begin{align*}
    \limsup_{n\to+\infty} \theta_0^{-n}n^{-\|j\|_\infty}\theta_{QS}^n 
    <+\infty,
\end{align*}
and hence $\theta_{QS}\leq\theta_0$.

\section{Proof of Theorem~\ref{thm:unique-QSD}}
\label{sec:lastone}

  Fix $a\in(1,\momentexpo)$ such that $(\lambda^*)^a<\theta_0$, and consider the Lyapunov type property with constants $\theta_a$ and $C_a$ from Proposition~\ref{prop:lyap}.
We make use of Section~2 in~\cite{ChampagnatVillemonais2017}.
In a first step, we check that Assumption~E therein (recalled in the Appendix below) holds true for the process under consideration. In a second step, we prove~\eqref{eq:thmQSD2},~\eqref{eq:thmQSD1} and~\eqref{eq:thmQSD3} and~\eqref{eq:thmQSD4} using the results of~\cite{ChampagnatVillemonais2017}.

\medskip\noindent\textit{Step 1. Assumption~(E) holds true.}
Fix $\theta_1\in(\theta_a,\theta_0)$. As $\mathcal{P}$ is strictly positive on $\mathbb R_+^p\setminus\{0\}$ and positively homogeneous, we can fix $r_1\geq p$ large enough so that $Q_a(z)\geq \frac{C_a}{\theta_1-\theta_a}$ for all $|z|\geq r_1$, so that
\begin{align}
\label{eq:step0proof}
\mathbb E_z(Q_a(Z_1))\leq \theta_a Q_a(z)+C_a\leq \theta_1 Q_a(z),\quad\forall z\in\mathbb N^p\text{ such that }|z|\geq r_1.
\end{align}
Let us set  $K=\{z\in\mathbb N^p\setminus\{0\},\ |z|\leq r_1\}$ and $ \varphi_1=Q_a/\inf Q_a$ (note that $\inf Q_a>0$
 by Assumption~(P)). 

We deduce from the irreducibility and aperiodicity properties that there exists $n_1\geq 0$ such that 
\begin{align}
\label{eq:E1}
c_1:=\inf_{x,y\in K}\,\P_x(Z_{n_1}=y)>0.
\end{align}
Setting $\nu=\delta_{(1,\ldots,1)}$, this entails property (E1). 

We have $\inf  \varphi_1= 1$, so that the first line of (E2) is satisfied. Moreover, according to~\eqref{eq:step0proof}, the third line of (E2) also holds true. 

Choosing $\theta_2\in(\theta_1,\theta_0)$, we deduce from the definition of $\theta_0$ and the irreducibility of $Z$ that 
\begin{align*}
\theta_2^{-n}\min_{i\in \{1,\ldots,p\}}\mathbb P_{e_i}(Z_n\neq 0)\xrightarrow[n\to+\infty]{}+\infty,
\end{align*}
where  $e_i$ is the $i^{th}$ element of the canonical basis of $\mathbb N^p$.
Since, by the super-additivity of $\xi$, the process $Z$ is stochastically non-decreasing in the initial condition and since,  for all $z\in \mathbb N^p\setminus\{0\}$, there exists $i\in\{1,\ldots,p\}$ such that $z\geq e_i$, we deduce that
\begin{align*}
\theta_2^{-n}\inf_{z\in\mathbb N^p\setminus\{0\}}\mathbb P_z(Z_n\neq 0)\geq \theta_2^{-n}\min_{i\in \{1,\ldots,p\}}\mathbb P_{e_i}(Z_n\neq 0)\xrightarrow[n\to+\infty]{}+\infty.
\end{align*}
Let $n\geq 1$ be large enough so that $\theta_2^{-n}\inf_{z\in\mathbb N^p\setminus\{0\}}\mathbb P_z(Z_n\neq 0)\geq 1$, and set, for all $z\in \mathbb N^p\setminus\{0\}$
\begin{align*}
\varphi_2(z)=\frac1{\sum_{k=0}^{n-1} \theta_2^{-k}}\sum_{k=0}^{n-1} \theta_2^{-k} \mathbb P_z(Z_k\neq 0).
\end{align*}
We have, for all $z\in \mathbb N^p\setminus\{0\}$ and using the Markov property at time $1$,
\begin{align*}
\left(\sum_{k=0}^{n-1} \theta_2^{-k}\right)\mathbb E_z(\varphi_2(Z_1)\1_{Z_1\neq 0})&=\sum_{k=0}^{n-1} \theta_2^{-k} \mathbb P_z(Z_{k+1}\neq 0)\\
                                         &=\theta_2\sum_{k=1}^{n} \theta_2^{-k} \mathbb P_z(Z_{k}\neq 0)\\
                                         &=\theta_2\left(\sum_{k=0}^{n-1} \theta_2^{-k}\right)\varphi_2(z)+\theta_2\left(\theta_2^{-n} \mathbb P_z(Z_{n}\neq 0)-1\right)\\
                                         &\geq \theta_2 \left(\sum_{k=0}^{n-1} \theta_2^{-k}\right) \varphi_2(z).
\end{align*}
As a consequence the second and fourth lines of (E2) are satisfied. Note also that we have $\inf_{\mathbb N^p\setminus\{0\}} \varphi_2\geq \frac1{\sum_{k=0}^{n-1} \theta_2^{-k}}$.

For all $x,y\in K$ and $n\geq 0$, we have, using the Markov property at time $n_1$ and the fact that $\mathbb P_x(Z_{n}\neq 0)$ is non-increasing in $n$ (as $\{0\}$ is absorbing),
\begin{align*}
\mathbb P_x(Z_n\neq 0)\geq \mathbb P_x(Z_{n\vee n_1}\neq 0)\geq \P_x(Z_{n_1}=y)\,\mathbb P_y(Z_{n\vee n_1-n_1}\neq 0)\geq c_1\,\mathbb P_y(Z_{n}\neq 0).
\end{align*}
This proves (E3).

Finally (E4) holds true since we assumed that the process $Z$ is aperiodic and irreducible. 

We have thus proved that Assumption~(E) holds true for $Z$ absorbed when it leaves $\mathbb N^p\setminus\{0\}$. This concludes the first step.

\medskip\noindent\textit{Step 2. Conclusion of the proof.} 
Theorem~2.1 in~\cite{ChampagnatVillemonais2017} implies that there exists a unique quasi-stationary distribution $\nu_{QS}$ in $\mathcal M(Q_a)$ and that~\eqref{eq:thmQSD3} holds true (using the fact that in our case, $\varphi_2$ is lower bounded away from $0$). Theorem~2.3 in this reference also implies that there exists an associated non-negative eigenfunction $\eta_{QS}\in L^\infty(Q_a)$ with eigenvalue $\theta_{QS}>0$, where $\theta_{QS}$ is the absorption parameter associated to $\nu_{QS}$, such that $\inf_K \eta_{QS}>0$ and
\begin{align}
\label{eq:prooftheta}
\theta_{QS}^{-n}\mathbb P_\cdot(Z_n\neq 0)\xrightarrow[n\to+\infty]{L^\infty(Q_a)} \eta_{QS}(\cdot).
\end{align}
Since the process is irreducible, for all $z\in\mathbb N^p\setminus\{0\}$, there exists $n\geq 0$ such that $\mathbb P_z(Z_n=(1,\ldots,1))>0$ and hence 
\begin{align*}
\theta_{QS}^n\eta_{QS}(z)=\mathbb E_z(\eta_{QS}(Z_n)\mathbf 1_{Z_n\neq 0})\geq \eta_{QS}((1,\ldots,1))\mathbb P_z(Z_n=(1,\ldots,1))>0,
\end{align*}
so that $\eta_{QS}$ is positive.   We thus proved~\eqref{eq:thmQSD2} (for the absorption parameter $\theta_{QS}$ which we prove below to be equal to $\theta_0$). Finally, since $\xi$ is super-additive, $\theta_{QS}^{-n}\mathbb P_z(Z_n\neq 0)$ and hence $\eta_{QS}$ increases with $z$, so that $\eta_{QS}$ is lower bounded away from $0$.
Corollary~2.7 in~\cite{ChampagnatVillemonais2017} shows that~\eqref{eq:thmQSD1} holds true, and, taking $f\equiv 1$, also implies that $\theta_{QS}=\theta_0$. 
Finally,~\eqref{eq:thmQSD4} is an immediate consequence of Corollary~2.11 in~\cite{ChampagnatVillemonais2017}. This concludes the proof of Theorem~\ref{thm:unique-QSD}.


\appendix

\section{Sufficient criterion for the exponential convergence to a quasi-sta\-tio\-nary distribution}

In Section 2 of~\cite{ChampagnatVillemonais2017}, the authors state that Assumption~(E)  is sufficient to prove exponential convergence toward a quasi-stationary distribution. Let us recall this assumption, with the notations and settings of the present paper.


In what follows, we consider a process $(Z_n)_{n\in\mathbb N}$ evolving in a measurable state space $E\cup\{\partial\}$, where $\partial\notin E$ is an absorbing point.

\medskip\noindent\textbf{Assumption (E).} There exists a positive integer $n_1$, positive real constants
$\theta_1,\theta_2,c_1,c_2,c_3$, two functions $\varphi_1,\varphi_2: E\rightarrow \R_+$ and a probability measure $\nu$ on a
measurable subset $K\subset E$ such that
\begin{itemize}
    \item[(E1)] \textit{(Local Dobrushin coefficient).} $\forall z\in K$, 
    \begin{align*}
    \mathbb{P}_z(Z_{n_1}\in\cdot)\geq c_1 \nu(\cdot\cap K).
    \end{align*}
    \item[(E2)] \textit{(Global Lyapunov criterion).} We have $\theta_1<\theta_2$ and
    \begin{align*}
    &\inf_{z\in E} \varphi_1(z)\geq 1,\ \sup_{z\in K}\varphi_1(z)<\infty \\
    &\inf_{z\in K} \varphi_2(z)>0,\ \sup_{z\in E}\varphi_2(z)\leq 1,\\
    &\mathbb E_z(\varphi_1(Z_1)\mathbf{1}_{Z_1\neq \partial})\leq \theta_1\varphi_1(z)+c_2\mathbf 1_K(z),\ \forall z\in E\\
    &\mathbb E_z(\varphi_2(Z_1)\mathbf{1}_{Z_1\neq \partial})\geq \theta_2\varphi_2(z),\ \forall z\in E.
    \end{align*}
    \item[(E3)] \textit{(Local Harnack inequality).} We have
    \begin{align*}
    \sup_{n\in \Z_+}\frac{\sup_{y\in K} \mathbb{P}_y(Z_n\neq \partial)}{\inf_{y\in K} \mathbb{P}_y(Z_n\neq \partial )}\leq c_3.
    \end{align*}
    \item[(E4)] \textit{(Aperiodicity).} For all $z\in K$, there exists $n_4(z)$ such that, for all $n\geq n_4(z)$,
    \begin{align*}
    \mathbb{P}_z(Z_n\in K)>0.
    \end{align*}
\end{itemize}

Our aim is to show that the condition involving the function $\varphi_2$ can be replaced by a condition involving $\theta_0$, where
\begin{align}
    \label{eq:utheta_def_general}
    \utheta = \sup_{z\in E}\sup\{\theta>0,\ \liminf_{n\to+\infty} \theta^{-n}\mathbb P_z(Z_n\neq \partial)>0\},
\end{align}
More precisely, we consider the following assumption.

\begin{itemize}
    \item[(E2')]  We have $\theta_1<\theta_0$ and
    \begin{align}
        &\inf_{z\in E} \varphi_1(z)\geq 1,\ \sup_{z\in K}\varphi_1(z)<\infty \nonumber\\
        &\mathbb E_z(\varphi_1(Z_1)\mathbf{1}_{Z_1\neq \partial})\leq \theta_1\varphi_1(z)+c_2\mathbf 1_K(z),\ \forall z\in E.\label{eq:E2prime}
    \end{align}
\end{itemize}

\begin{proposition}
    \label{prop:E-with-theta0}
    Assume that there exists a positive integer $n_1$, positive real constants
$\theta_1,c_1,c_2,c_3$, a function $\varphi_1: E\rightarrow \R_+$ and a probability measure $\nu$ on a
measurable subset $K\subset E$ such that (E1), (E'2), (E3) and (E4) hold true. Then, for any $\theta_2\in(\theta_1,\theta_0)$, there exists $\varphi_2$ such that Assumption~(E2) holds true.
\end{proposition}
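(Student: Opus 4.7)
The plan adapts the construction of $\varphi_2$ from the proof of Theorem~\ref{thm:unique-QSD} in Section~\ref{sec:lastone}, which relied on irreducibility to produce a uniform lower bound $\theta_2^{-n}\mathbb P_z(Z_n\neq \partial)\geq 1$ on all of $E$. In the general setting of the present proposition no such uniform bound is available; instead, I first derive a survival lower bound on $K$ and then extend the construction to $E$ through a first-hitting-time formula.

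For the survival bound, fix $\theta_2\in(\theta_1,\theta_0)$ and pick $\theta_2'\in(\theta_2,\theta_0)$. By the definition of $\theta_0$ in~\eqref{eq:utheta_def_general}, there exists $z^\ast\in E$ with $\liminf_n (\theta_2')^{-n}\mathbb P_{z^\ast}(Z_n\neq \partial)>0$. Iterating (E'2) outside $K$, in the spirit of Step~1 of the proof of Lemma~\ref{lem:useful}, gives $\mathbb P_{z^\ast}(Z_n\neq \partial,\ \tau_K>n)\leq C\theta_1^n$ where $\tau_K$ denotes the first hitting time of $K$. Since $\theta_1<\theta_2'$, a subtraction yields $\mathbb P_{z^\ast}(Z_n\neq \partial,\ \tau_K\leq n)\geq c(\theta_2')^n$ for all sufficiently large $n$. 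Decomposing the left-hand side at $\tau_K$ via the strong Markov property and using the Harnack-type bound~(E3), together with the local Dobrushin condition~(E1) and aperiodicity~(E4) to extend the estimate uniformly in $n$, I obtain an integer $n_0\geq 1$ such that
\[
\mathbb P_y(Z_{n_0}\neq \partial)\geq \theta_2^{n_0}\quad\text{for every }y\in K.
\]

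With $n_0$ in hand, mimicking the Cesaro construction of Section~\ref{sec:lastone}, define on $K$
\[
\psi(y):=\frac{1}{S_{n_0}}\sum_{k=0}^{n_0-1}\theta_2^{-k}\mathbb P_y(Z_k\neq \partial),\qquad S_{n_0}:=\sum_{k=0}^{n_0-1}\theta_2^{-k}.
\]
The same telescoping identity as in Section~\ref{sec:lastone} shows that $\psi\in[1/S_{n_0},1]$ and $\mathbb E_y(\psi(Z_1)\mathbf 1_{Z_1\neq\partial})\geq\theta_2\psi(y)$ for every $y\in K$. I then extend $\psi$ from $K$ to $E$ via the first-hitting-time formula
\[
\varphi_2(z):=\mathbb E_z\!\left[\theta_2^{-\sigma_K}\psi(Z_{\sigma_K})\mathbf 1_{\sigma_K<T_\partial}\right],\qquad \sigma_K:=\inf\{k\geq 0:\,Z_k\in K\},
\]
where $T_\partial$ denotes the absorption time. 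For $z\notin K$, the strong Markov property at time~$1$ gives $\mathbb E_z(\varphi_2(Z_1)\mathbf 1_{Z_1\neq\partial})=\theta_2\varphi_2(z)$. For $z\in K$ we have $\varphi_2(z)=\psi(z)$, and a strong Markov argument at the first return time to $K$ reduces the desired inequality $\mathbb E_z(\varphi_2(Z_1)\mathbf 1_{Z_1\neq\partial})\geq\theta_2\varphi_2(z)$ to a sub-invariance property of $\psi$ under the $\theta_2$-weighted return kernel on $K$, which is guaranteed by the survival bound of the first step, possibly after replacing $\psi$ by a Krein--Rutman sub-eigenfunction of this return kernel.

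The main obstacle is to ensure $\sup_E\varphi_2<\infty$, so that a subsequent renormalization secures $\sup_E\varphi_2\leq 1$. Iterating~(E'2) yields $\mathbb E_z[\theta_2^{-\sigma_K}\mathbf 1_{\sigma_K<T_\partial}]\leq C\varphi_1(z)$ (the series converges since $\theta_1<\theta_2$), so $\varphi_2\leq C'\varphi_1$; but $\varphi_1$ may itself be unbounded on $E$. I expect to overcome this difficulty by replacing $\sigma_K$ in the definition of $\varphi_2$ by $\sigma_K\wedge N$ for a well-chosen deterministic time $N$, producing a bounded variant that preserves the subharmonic inequality up to a vanishing error controlled by~(E'2). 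After normalization this will yield a $\varphi_2$ meeting all of the requirements of~(E2).
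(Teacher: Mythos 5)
Your reduction of the case $z\in K$ to a sub-invariance property of the $\theta_2$-weighted return kernel is where the argument breaks down. Your Step~1 only yields a survival bound $\mathbb P_y(Z_{n_0}\neq\partial)\geq\theta_2^{n_0}$ on $K$, and this does not control the mass of excursions that actually \emph{return} to $K$: a priori, survival at a rate between $\theta_1$ and $\theta_0$ could be carried by trajectories that leave $K$ and never come back, and ruling this out is exactly the missing quantitative input. The fallback ``replace $\psi$ by a Krein--Rutman sub-eigenfunction of the return kernel'' presupposes that this kernel has spectral radius at least $1$, which is precisely the unproven point (and $K$ is only a measurable set, so no compactness is available to invoke Krein--Rutman off the shelf; one would also still need such a sub-eigenfunction to be bounded above and below on $K$). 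The paper's proof instead establishes the stronger statement $\inf_{y\in K}\theta_2^{-n}\mathbb P_y(Z_n\in K)\geq 1$ for some $n$, by contradiction: if it failed for every $n$, then (E1), (E2') and (E3) combine to give $\mathbb P_z(Z_n\neq\partial)\leq C_\theta\,\theta^n\varphi_1(z)$ for every $\theta>\theta_2$, forcing $\theta_0\leq\theta_2$. Your convolution argument in Step~1 is close to this, but you stopped at the weaker survival bound, and the gap between ``survives'' and ``is back in $K$'' is not bridged anywhere in your proposal.

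The second gap is the boundedness requirement $\sup_E\varphi_2\leq 1$ in (E2), which you acknowledge but do not resolve. Your hitting-time extension satisfies only $\varphi_2\leq C\varphi_1$, and $\varphi_1$ may be unbounded; truncating $\sigma_K$ at a deterministic time $N$ gives the inequality $\mathbb E_z(\varphi_2(Z_1)\mathbf 1_{Z_1\neq\partial})\geq\theta_2\varphi_2(z)$ only ``up to a vanishing error'', whereas (E2) requires the exact pointwise inequality at every $z\in E$, so the proposed fix does not deliver the statement. Both difficulties disappear with the paper's choice
\begin{align*}
\varphi_2(z)\;\propto\;\sum_{k=0}^{n-1}\theta_2^{-k}\,\mathbb P_z(Z_k\in K),
\end{align*}
with $n$ as above: the telescoping identity produces the correction term $\theta_2^{-n}\mathbb P_z(Z_n\in K)-\mathbf 1_K(z)$, which is nonnegative on $K$ by the choice of $n$ and trivially nonnegative off $K$ because of the indicator, while $\sup_E\varphi_2\leq 1$ is automatic since the summands are probabilities. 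In short, once Step~1 is upgraded to the ``return to $K$'' bound, the return-kernel and truncation machinery you introduce is unnecessary, and without that upgrade your construction does not close.
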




\begin{proof}
    In a first step, we prove that, for any $\theta_2\in (\theta_1,\theta_0)$, there exists $n\geq 1$ such that $\inf_{z\in K} \theta_2^{-n}\mathbb P_z(Z_n\in K)\geq 1$. In a second step, we conclude by building a function $\varphi_2$ which satisfies Assumption~(E2).
    
    \medskip\noindent \textit{Step 1.} Assume on the contrary that there exists $\theta_2\in (\theta_1,\theta_0)$ such that, for all $n\geq 1$,  $\inf_{z\in K} \theta_2^{-n}\mathbb P_z(Z_n\in K)<1$. Then, using (E1), we deduce that, for all $n\geq n_1$,
    \begin{align*}
    1>\inf_{z\in K} \theta_2^{-n}\mathbb P_z(Z_n\in K)\geq \theta_2^{-n} c_1 \mathbb P_{\nu}(Z_{n-n_1}\in K)
    \end{align*}
    and hence that, for all $n\geq 0$,
    \begin{align*}
    \mathbb P_{\nu}(Z_{n}\in K)< \frac{\theta_2^{n_1}}{c_1}\theta_2^{n}.
    \end{align*}
    Applying~\eqref{eq:E2prime} iteratively, we deduce that, for all $z\in E$, for all $n\geq 1$,
    \begin{align*}
    \mathbb E_z(\varphi_1(Z_n)\1_{Z_n\neq \partial})&\leq \theta_1^n\varphi_1(z)+ c_2\sum_{k=1}^n \theta_1^{n-k} \mathbb P_z(Z_{k-1}\in K).
    \end{align*}
    Integrating with respect to $\nu$ and using the two previous inequalities, we get
    \begin{align}
    \label{eq:nuPnvarphi1}
    \mathbb E_{\nu}(\varphi_1(Z_n)\1_{Z_n\neq\partial})\leq \theta_1^n\nu(\varphi_1)+ c_2\sum_{k=1}^n \theta_1^{n-k} \mathbb P_{\nu}( Z_{k-1} \in K)\leq \theta_1^n\nu(\varphi_1)+ c_2\sum_{k=1}^n \theta_1^{n-k} \frac{\theta_2^{n_1-1}}{c_1} \theta_2^{k} \leq C' \theta_2^n,
    \end{align}
    for some constant $C'$ where we used the fact that $\nu(\varphi_1)\leq \sup_K\varphi_1<+\infty$ and $\theta_1 < \theta_2$. 
    
    For all $z\in E$ and all $n\geq 0$, we have, denoting $\tau_K=\min\{k\geq 0,\ Z_k\in K\}$ and using the strong Markov property at time $\tau_K$, 
    \begin{align*}
    \mathbb P_z(Z_n\neq\partial)&\leq  \mathbb E_z(\varphi_1(Z_n) 1_{Z_n\neq\partial}1_{n< \tau_K})+\sum_{k=0}^n \mathbb P_z(\tau_K=k) \sup_{y\in K} \mathbb P_y(Z_{n-k}\neq \partial)\\
    &\leq \theta_1^n\varphi_1(z)+\sum_{k=0}^n \mathbb P_z(\tau_K=k) c_3 \mathbb P_{\nu}(Z_{n-k}\neq\partial),
    \end{align*}
    where we used~\eqref{eq:E2prime} for the first term in the right hand side, and~(E3) for the second term. Since we also have, by~\eqref{eq:E2prime} for all $k\geq 2$ and trivially for $k=1$, 
    \[
    \mathbb P_z(\tau_K=k)\leq \mathbb P_z(Z_{k-1}\neq \partial,\ k-1<\tau_K) \leq \mathbb E_z(\varphi_1(Z_{k-1}) 1_{Z_{k-1}\neq\partial}1_{k-1<\tau_K})\leq \theta_1^{k-1} \varphi_1(z),
    \]
    and since $\mathbb P_{\nu}(Z_{n-k}\neq \partial)\leq  \mathbb E_{\nu}(\varphi_1(Z_{n-k})\1_{Z_{n-k}\neq \partial}) $,    
    we deduce from the previous inequality and from~\eqref{eq:nuPnvarphi1} that
    \begin{align*}
    \mathbb P_z(Z_n\neq \partial)&\leq \left(\theta_1^n+\sum_{k=0}^n \theta_1^{k-1} c_3 C'\theta_2^{n-k}\right)\varphi_1(z).
    \end{align*}
    In particular, for all $z\in E$, for all $\theta>\theta_2>\theta_1$, 
    \begin{align*}
    \theta^{-n}\mathbb P_z(Z_n\neq \partial)\xrightarrow[n\to+\infty]{} 0,
    \end{align*}
    then, by definition of $\theta_0$, we get $\theta_0\leq \theta_2$, which contradicts assumption $\theta_2\in(\theta_1,\theta_0)$.
    
    This concludes the proof of our first step: for any $\theta_2\in (\theta_1,\theta_0)$, there exists $n\geq 1$ such that $\inf_{z\in K} \theta_2^{-n}\mathbb P_z(Z_n\in K)\geq 1$.
    
    \medskip\noindent\textit{Step 2.} Fix $\theta_2\in(\theta_1,\theta_0)$ and let $n\geq 1$ such that $\inf_{z\in K} \theta_2^{-n}\mathbb P_z(Z_n\in K)\geq 1$, and define the function
    \begin{align*}
    \varphi_2:z\in E\mapsto C_{\theta_2}\sum_{k=0}^{n-1} \theta_2^{-k}\mathbb P_z(Z_k\in K)
    \end{align*}
    with $C_{\theta_2}:=\left(\frac{1-\theta_2^n}{1-\theta_2}\right)^{-1}$. Then, $\inf_{z\in K} \varphi_2(z)>0$ and $\sup_{z\in E}\varphi_2(z)\leq 1$. In addition,
    \begin{align*}
    \mathbb E_z(\varphi_2(Z_1)\1_{Z_1\neq \partial})
    &=C_{\theta_2}\sum_{k=0}^{n-1} \theta_2^{-k} \mathbb P_z(Z_{k+1} \in K)=C_{\theta_2}\theta_2\sum_{k=1}^{n} \theta_2^{-k} \mathbb P_z (Z_k\in K)\\
    &=\theta_2\varphi_2(z)+C_{\theta_2}\theta_2\left(\theta_2^{-n}\mathbb P_z (Z_n\in K)-\1_K(z) \right).
    \end{align*}
    Since $\theta_2^{-n}\mathbb P_z (Z_n\in K)-\1_K(z)\geq 0$ for all $z\in K$ (by definition of $n$) and for all $z\in E\setminus K$ (since for such $z$, we have $1_K(z)=0$), we deduce that 
    \begin{align*}
    \mathbb E_z(\varphi_2(Z_1)\1_{Z_1\neq \partial})\geq \theta_2 \varphi_2(z),\ \forall z\in E.
    \end{align*}
    This concludes the proof of Proposition~\ref{prop:E-with-theta0}.
\end{proof}

\bibliographystyle{abbrv}

\begin{thebibliography}{10}

\bibitem{Alsmeyer2002}
G.~Alsmeyer.
\newblock Bisexual {G}alton-{W}atson processes: A survey.
\newblock \url{https://www.uni-muenster.de/Stochastik/alsmeyer/bisex(survey).pdf}, 2002.

\bibitem{AthreyaNey1972}
K.~B. Athreya and P.~E. Ney.
\newblock {\em Branching Processes}.
\newblock Springer-Verlag, New York, 1972.
\newblock Die Grundlehren der mathematischen Wissenschaften, Band 196.

\bibitem{benari2024}
I.~Ben-Ari and N.~Jiang.
\newblock Representation and characterization of quasistationary distributions for markov chains, 2024.

\bibitem{ChampagnatVillemonais2022}
N.~Champagnat and D.~Villemonais.
\newblock Quasi-stationary distributions in reducible state spaces.
\newblock {\em arXiv preprint arXiv:2201.10151}, 2022.

\bibitem{ChampagnatVillemonais2017}
N.~Champagnat and D.~Villemonais.
\newblock {General criteria for the study of quasi-stationarity}.
\newblock {\em Electronic Journal of Probability}, 28:1 -- 84, 2023.

\bibitem{ColletMartinezEtAl2013}
P.~Collet, S.~Mart\'inez, and J.~San~Mart\'in.
\newblock {\em Quasi-Stationary Distributions}.
\newblock Probability and its Applications (New York). Springer, Heidelberg, 2013.
\newblock Markov chains, diffusions and dynamical systems.

\bibitem{Daley1968}
D.~J. Daley.
\newblock Extinction conditions for certain bisexual {G}alton-{W}atson branching processes.
\newblock {\em Zeitschrift f{\"u}r Wahrscheinlichkeitstheorie und Verwandte Gebiete}, 9(4):315--322, 1968.

\bibitem{DaleyHullEtAl1986}
D.~J. Daley, D.~M. Hull, and J.~M. Taylor.
\newblock Bisexual {G}alton-{W}atson branching processes with superadditive mating functions.
\newblock {\em Journal of Applied Probability}, pages 585--600, 1986.

\bibitem{FerrariKestenEtAl1995}
P.~A. Ferrari, H.~Kesten, S.~Martinez, and P.~Picco.
\newblock {Existence of Quasi-Stationary Distributions. A Renewal Dynamical Approach}.
\newblock {\em The Annals of Probability}, 23(2):501 -- 521, 1995.

\bibitem{FritschVillemonaisEtAl2022}
C.~Fritsch, D.~Villemonais, and N.~Zalduendo.
\newblock The multi-type bisexual {G}alton-{W}atson branching process.
\newblock {\em Annales de l’Institut Henri Poincar\'e}, in press.

\bibitem{GonzalezHullEtAl2006}
M.~Gonz{\'a}lez, D.~M. Hull, R.~Mart{\'\i}nez, and M.~Mota.
\newblock Bisexual branching processes in a genetic context: The extinction problem for y-linked genes.
\newblock {\em Mathematical Biosciences}, 202(2):227--247, 2006.

\bibitem{GonzalezMolina1996}
M.~Gonz{\'a}lez and M.~Molina.
\newblock On the limit behaviour of a superadditive bisexual {G}alton--{W}atson branching process.
\newblock {\em Journal of Applied Probability}, 33(4):960--967, 1996.

\bibitem{Hull2003}
D.~M. Hull.
\newblock A survey of the literature associated with the bisexual {G}alton-{W}atson branching process.
\newblock {\em Extracta Mathematicae}, 18(3):321--343, 2003.

\bibitem{Krause1994}
U.~Krause.
\newblock Relative stability for ascending and positively homogeneous operators on {B}anach spaces.
\newblock {\em Journal of Mathematical Analysis and Applications}, 188(1):182--202, 1994.

\bibitem{Maillard2018}
P.~Maillard.
\newblock The {$\lambda$}-invariant measures of subcritical {B}ienaym\'e-{G}alton-{W}atson processes.
\newblock {\em Bernoulli}, 24(1):297--315, 2018.

\bibitem{MeleardVillemonais2012}
S.~M{\'e}l{\'e}ard and D.~Villemonais.
\newblock Quasi-stationary distributions and population processes.
\newblock {\em Probability Surveys}, 9:340--410, 2012.

\bibitem{Molina2010}
M.~Molina.
\newblock Two-sex branching process literature.
\newblock In {\em Workshop on Branching Processes and Their Applications}, pages 279--293. Springer, 2010.

\bibitem{SenetaVere-Jones1966}
E.~Seneta and D.~Vere-Jones.
\newblock On quasi-stationary distributions in discrete-time {M}arkov chains with a denumerable infinity of states.
\newblock {\em Journal of Applied Probability}, 3:403--434, 1966.

\bibitem{DoornPollett2013}
E.~A. van Doorn and P.~K. Pollett.
\newblock Quasi-stationary distributions for discrete-state models.
\newblock {\em European Journal of Operational Research}, 230(1):1--14, 2013.

\bibitem{Yaglom1947}
A.~M. Yaglom.
\newblock Certain limit theorems of the theory of branching random processes.
\newblock {\em Doklady Akademii Nauk (N.S.)}, 56:795--798, 1947.

\bibitem{zaldu2023}
N.~M. Zalduendo~Vidal.
\newblock {\em Processus de branchement bi-sexués multi-types}.
\newblock PhD thesis, 2023.
\newblock Thèse de doctorat dirigée par Villemonais, Denis et Fritsch, Coralie Mathématiques Université de Lorraine 2023.

\end{thebibliography}

\end{document}